\newtheorem{example}{Example}
\journal{...}
\newtheorem{theorem}{Theorem}[section]
\newtheorem{lemma}{Lemma}[section]
\newtheorem{cor}{Corollary}[section]
\theoremstyle{definition}
\begin{document}

\begin{frontmatter}

\title{Estimates for the differences of positive linear operators and their derivatives}

\author[1]{Ana-Maria Acu}
\author[2]{Ioan Ra\c sa}
\address[1]{Lucian Blaga University of Sibiu, Department of Mathematics and Informatics, Str. Dr. I. Ratiu, No.5-7, RO-550012  Sibiu, Romania, e-mail: anamaria.acu@ulbsibiu.ro}
\address[2]{Technical University of Cluj-Napoca, Faculty of Automation and Computer Science, Department of Mathematics, Str. Memorandumului nr. 28 Cluj-Napoca, Romania,
	e-mail:  ioan.rasa@math.utcluj.ro }

\begin{abstract}

The present paper deals with the estimate of the differences of certain positive linear operators and their derivatives. Our approach involves operators defined on bounded intervals, as Bernstein operators, Kantorovich operators, genuine Bernstein-Durrmeyer operators, Durrmeyer operators with Jacobi weights. The estimates in quantitative form are given in terms of first modulus of continuity. In order to analyze the theoretical results in the last section we consider some numerical examples.
\end{abstract}

\begin{keyword} first modulus of continuity;  positive linear operators; Bernstein operators;  Durrmeyer operators; Kantorovich operators

	\MSC[2010] 41A25, 41A36.
\end{keyword}

\end{frontmatter}

\section{Introduction}

The de la Vall\'ee Poussin operators of a $2\pi$-periodic integrable function $f$ are defined as 
$$ L_n(f;x)=\dfrac{1}{2\pi}\dfrac{(n!)^2}{(2n)!}2^{2n}\displaystyle\int_{-\pi}^{\pi}f(u)\left(\cos\dfrac{x-u}{2}\right)^{2n}du. $$
These operators are trigonometric analogues of the Bernstein operators. It is well-known that de la Vall\'ee-Poussin operator commutes with the derivative. Indeed, for $f\in C^1_{2\pi}[-\pi,\pi], $
$$ L_n(f;x)=\dfrac{1}{2\pi}\dfrac{(n!)^2}{(2n)!}2^{2n}\displaystyle\int_{-\pi-x}^{\pi-x}f(x+t)\left(\cos\dfrac{t}{2}\right)^{2n}dt $$
and we get
\begin{align*}
\left(L_n(f;x)\right)^{\prime}&=\dfrac{1}{2\pi}\dfrac{(n!)^2}{(2n)!}2^{2n}\left\{
\int_{-\pi-x}^{\pi-x}f^{\prime}(x+t)\left(\cos \dfrac{t}{2}\right)^{2n}dt-f(\pi)\left(\cos\dfrac{\pi-x}{2}\right)^{2n}\right.\\
&\left.+f(-\pi)\left(\cos\dfrac{-\pi-x}{2}\right)^{2n}  \right\}=L_n\left(f^{\prime};x\right).
\end{align*}
Thus $(L_nf)^{(k)}=L_n(f^{(k)})$, for $f\in C^k_{2\pi}[-\pi,\pi]$.
Certainly, this property is not available for the Bernstein operators $B_n$. The polynomials $(B_nf)^{(k)}$ and $B_{n-k}(f^{(k)})$ have degree $n-k$ and converge to $f^{(k)}$. 
This remark motivated us to estimate in terms of moduli of continuity the differences $(L_nf)^{(k)}-L_{n-k}(f^{(k)})$ for certain positive linear operators, as  Bernstein, Kantorovich, genuine Bernstein-Durrmeyer, Durrmeyer operators with Jacobi weights.

The study of differences of certain positive and linear operators has as starting point the problem proposed by Lupa\c s \cite{Adif_A20}, namely the question raised by him was to give an estimate for
$ B_n\circ \overline{\mathbb{B}}_{n}- \overline{\mathbb{B}}_{n}\circ B_n$, where $B_n$ and  $\overline{\mathbb{B}}_n$ are Bernstein operators and Beta operators, respectively. A solution for the problem proposed by Lupa\c s was given for a more general case in \cite{Adif_NAAT}. Some interesting results on this topic were established by Gonska et al. in \cite{Adif_GR1} and \cite{Adif_GR2}. 
 New estimates of the differences of certain operators are provided in a recent paper of Acu et al.  \cite{ana}. These estimates  improve some results concerning the differences of the $U_n^{\rho}$ operators studied in \cite{ARasa, Adif_a}. Very recently,  Aral et al. \cite{AralRasa} obtained some quantitative  results in terms of weighted modulus of continuity for differences of certain positive linear operators defined on unbounded intervals. Also, some estimates for the Chebyshev functional of these operators were provided.
 
 Throught the paper $\|\cdot\|$ denotes the supremum norm and $\omega(f,\cdot)$ is the modulus of continuity of the function $f$.

\section{The Bernstein operators}

Bernstein operators are one of the most important sequences of positive linear operators. These operators were introduced by Bernstein
\cite{Bernstein} and were intensively studied.   For $f\in C[0,1]$, the Bernstein operators  are defined by
\begin{eqnarray*}
	B_n(f,x) = \sum\limits_{k=0}^{n} p_{n,k}(x) \ f\left(\frac{k}{n} \right) , \ \ x \in [0,1],
\end{eqnarray*}
where $p_{n,k}(x)=\displaystyle \binom{n}{k} x^k (1-x)^{n-k}, \ \ k=0,1, \ldots, n. $
\begin{theorem}
	For Bernstein operators the following property holds:
	$$\left\| \left(B_nf\right)^{(r)}-B_{n-r}\left(f^{(r)}\right)\right \|\leq \displaystyle\frac{(r-1)r}{2n}\| f^{(r)}\|+\omega\left(f^{(r)},\frac{r}{n}\right), \,\, f\in C^{r}[0,1],\,\,r=0,1,\dots n.  $$
\end{theorem}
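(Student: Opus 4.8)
The plan is to rely on the classical explicit formula for the derivatives of a Bernstein polynomial in terms of forward differences, namely
\[
\left(B_nf\right)^{(r)}(x)=\frac{n!}{(n-r)!}\sum_{k=0}^{n-r}p_{n-r,k}(x)\,\Delta^r_{1/n}f\!\left(\frac{k}{n}\right),
\]
where $\Delta^r_{1/n}f(k/n)=\sum_{j=0}^r(-1)^{r-j}\binom{r}{j}f\!\left(\frac{k+j}{n}\right)$. Since $B_{n-r}(f^{(r)},x)=\sum_{k=0}^{n-r}p_{n-r,k}(x)\,f^{(r)}\!\left(\frac{k}{n-r}\right)$, the two operators share the same Bernstein basis $p_{n-r,k}$, so the difference collapses to a single sum whose coefficients I can estimate termwise and then collect using $\sum_k p_{n-r,k}(x)=1$.

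The key analytic tool is the mean value theorem for finite differences: for $f\in C^r[0,1]$ there is a point $\xi_k\in\left(\frac{k}{n},\frac{k+r}{n}\right)$ with $\Delta^r_{1/n}f(k/n)=n^{-r}f^{(r)}(\xi_k)$. Setting $c_{n,r}:=\frac{n!}{(n-r)!\,n^{r}}=\prod_{j=0}^{r-1}\left(1-\frac{j}{n}\right)$, each bracketed coefficient becomes $c_{n,r}f^{(r)}(\xi_k)-f^{(r)}\!\left(\frac{k}{n-r}\right)$, which I split as
\[
c_{n,r}\left[f^{(r)}(\xi_k)-f^{(r)}\!\left(\tfrac{k}{n-r}\right)\right]+(c_{n,r}-1)\,f^{(r)}\!\left(\tfrac{k}{n-r}\right).
\]
The second summand is controlled by $0\le 1-c_{n,r}\le\sum_{j=0}^{r-1}\frac{j}{n}=\frac{(r-1)r}{2n}$ (the standard lower bound $\prod(1-a_j)\ge 1-\sum a_j$), yielding the coefficient $\frac{(r-1)r}{2n}\|f^{(r)}\|$ exactly as in the statement; the first summand is bounded by $c_{n,r}\,\omega\!\left(f^{(r)},\left|\xi_k-\frac{k}{n-r}\right|\right)\le\omega\!\left(f^{(r)},\left|\xi_k-\frac{k}{n-r}\right|\right)$.

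What remains, and what I regard as the one genuinely careful step, is locating the argument gap $\left|\xi_k-\frac{k}{n-r}\right|$. Here I would verify that $\frac{k}{n-r}$ lies in the same interval $\left[\frac{k}{n},\frac{k+r}{n}\right]$ that contains $\xi_k$: the inequality $\frac{k}{n}\le\frac{k}{n-r}$ is immediate, while $\frac{k}{n-r}\le\frac{k+r}{n}$ reduces to $r(n-k-r)\ge 0$, which holds precisely because the summation index satisfies $0\le k\le n-r$. Consequently both points lie in an interval of length $\frac{r}{n}$, so $\left|\xi_k-\frac{k}{n-r}\right|\le\frac{r}{n}$ and $\omega\!\left(f^{(r)},\left|\xi_k-\frac{k}{n-r}\right|\right)\le\omega\!\left(f^{(r)},\frac{r}{n}\right)$ for every $k$. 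Summing the termwise bound against $p_{n-r,k}(x)\ge 0$ and using $\sum_k p_{n-r,k}(x)=1$ together with $c_{n,r}\le 1$ gives the claimed estimate uniformly in $x$. The cases $r=0,1$ are degenerate (the first coefficient vanishes) and are absorbed by the same argument.
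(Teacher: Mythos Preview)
Your proof is correct and follows essentially the same approach as the paper: both express $(B_nf)^{(r)}$ via forward differences in the basis $p_{n-r,k}$, apply the mean value theorem to replace $\Delta^r_{1/n}f(k/n)$ by $n^{-r}f^{(r)}(\xi_k)$, split the resulting coefficient into a ``$c_{n,r}-1$'' part and a modulus-of-continuity part, and use that $\frac{k}{n}\le\frac{k}{n-r}\le\frac{k+r}{n}$ to bound the argument gap by $r/n$. The only cosmetic difference is the choice of pivot in the splitting (you use $f^{(r)}(k/(n-r))$, the paper uses $f^{(r)}(\xi_i)$), which is immaterial.
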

\begin{proof}
	The above differences  can be written as
	\begin{align*}
	&\left(B_n(f;x)\right)^{(r)}-B_{n-r}(f^{(r)}(x))\\
	&=n(n-1)\dots(n-r+1)\displaystyle\sum_{i=0}^{n-r}p_{n-r,i}(x)\Delta_{\frac{1}{n}}^rf\left(\frac{i}{n}\right)-\displaystyle\sum_{i=0}^{n-r}p_{n-r,i}(x)f^{(r)}\left(\frac{i}{n-r}\right)\\
	&=\displaystyle\sum_{i=0}^{n-r}p_{n-r,i}(x)\left\{n(n-1)\dots(n-r+1)\Delta_{\frac{1}{n}}^rf\left(\frac{i}{n}\right)-f^{(r)}\left(\frac{i}{n-r}\right)\right\}\\
	&=\displaystyle\sum_{i=0}^{n-r}p_{n-r,i}(x)\left\{\displaystyle\frac{n(n-1)\dots(n-r+1)}{n^r}r!\left[\frac{i}{n},\dots,\frac{i+r}{n};f\right]-f^{(r)}\left(\frac{i}{n-r}\right)\right\}\\
	&=\displaystyle\sum_{i=0}^{n-r}p_{n-r,i}(x)\left\{\displaystyle\frac{n(n-1)\dots(n-r+1)}{n^r}f^{(r)}(\xi_i)-f^{(r)}\left(\frac{i}{n-r}\right)\right\}\\
	&=\displaystyle\sum_{i=0}^{n-r}p_{n-r,i}(x)\left\{\left(\displaystyle\frac{n(n-1)\dots(n-r+1)}{n^r}-1\right)f^{(r)}(\xi_i)+f^{(r)}(\xi_i)-f^{(r)}\left(\frac{i}{n-r}\right)\right\},
	\end{align*}
	where $\displaystyle\frac{i}{n}\leq\xi_i\leq\frac{i+r}{n}$.\\
	We have,
	$$ 0\leq 1-\displaystyle\frac{n(n-1)\dots(n-r+1)}{n^r}\leq\frac{r(r-1)}{2n}\, \textrm{ and }\, \displaystyle\frac{i}{n}\leq\frac{i}{n-r}\leq\frac{i+r}{n},\textrm{ for } 0\leq i\leq n-r. $$
	Therefore,
	$$\left\| \left(B_nf\right)^{(r)}-B_{n-r}\left(f^{(r)}\right)\right \|\leq \displaystyle\frac{(r-1)r}{2n}\| f^{(r)}\|+\omega\left(f^{(r)},\frac{r}{n}\right).$$
\end{proof}

\section{The Kantorovich operators}
These operators are the integral modification of Bernstein operators   and were introduced by Kantorovich \cite{K} as follows
\begin{equation}\label{Y} K_{n}(f;x)=(n+1)\displaystyle\sum_{k=0}^{n}p_{n,k}(x)\int_{\frac{k}{n+1}}^{\frac{k+1}{n+1}}f(t)dt,\,\, f\in L_{1}[0,1].  \end{equation}
The Kantorovich operators are related to the Bernstein polynomials by:
$$K_n(f;x)=\left[B_{n+1}(F;x)\right]^{\prime}, \textrm{ where } F(x)=\displaystyle\int_0^xf(t)dt.  $$
\begin{theorem}
	The Kantorovich operators verify
	$$\left\| \left(K_nf\right)^{(r)}-K_{n-r}\left(f^{(r)}\right)\right \|\leq \displaystyle\frac{(r+1)r}{2(n+1)}\| f^{(r)}\|+\omega\left(f^{(r)},\frac{r+1}{n+1}\right), \,\, f\in C^{r}[0,1],\,\,r=0,1,\dots n.  $$
\end{theorem}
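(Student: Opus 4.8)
The plan is to imitate the structure of the Bernstein case treated above, but to pass first to a Bernstein operator through the stated identity $K_n(f;x)=\left[B_{n+1}(F;x)\right]^{\prime}$ with $F(x)=\int_0^x f(t)\,dt$. Since differentiation commutes with itself, this gives $(K_n f)^{(r)}=(B_{n+1}F)^{(r+1)}$, so I would apply the classical formula for the derivatives of a Bernstein polynomial,
\[
(B_{n+1}F)^{(r+1)}(x)=\frac{(n+1)!}{(n-r)!}\sum_{i=0}^{n-r}p_{n-r,i}(x)\,\Delta_{1/(n+1)}^{r+1}F\!\left(\tfrac{i}{n+1}\right).
\]
This already produces the same basis $p_{n-r,i}$ that appears in $K_{n-r}(f^{(r)})$, which is precisely why the difference collapses to a single sum.

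Next I would convert both expressions into point evaluations of $f^{(r)}$. For the Kantorovich term, the mean value theorem for integrals gives $K_{n-r}(f^{(r)};x)=\sum_{i=0}^{n-r}p_{n-r,i}(x)f^{(r)}(\eta_i)$ with $\eta_i\in\left[\tfrac{i}{n-r+1},\tfrac{i+1}{n-r+1}\right]$. For the Bernstein term, I would rewrite the finite difference as a divided difference, $\Delta_{1/(n+1)}^{r+1}F\big(\tfrac{i}{n+1}\big)=(r+1)!\,(n+1)^{-(r+1)}\big[\tfrac{i}{n+1},\dots,\tfrac{i+r+1}{n+1};F\big]$, and then use the mean value theorem for divided differences together with $F^{(r+1)}=f^{(r)}$ to obtain $\Delta_{1/(n+1)}^{r+1}F\big(\tfrac{i}{n+1}\big)=(n+1)^{-(r+1)}f^{(r)}(\xi_i)$ for some $\xi_i\in\left[\tfrac{i}{n+1},\tfrac{i+r+1}{n+1}\right]$. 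Writing $c:=\dfrac{(n+1)!}{(n-r)!\,(n+1)^{r+1}}$, the difference becomes $\sum_{i=0}^{n-r}p_{n-r,i}(x)\left\{c\,f^{(r)}(\xi_i)-f^{(r)}(\eta_i)\right\}$, which I would split as $(c-1)f^{(r)}(\xi_i)+\big(f^{(r)}(\xi_i)-f^{(r)}(\eta_i)\big)$, exactly as in the Bernstein case.

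Two elementary estimates then finish the argument. For the coefficient defect, factoring $c=\prod_{j=1}^{r}\big(1-\tfrac{j}{n+1}\big)$ and using $\prod(1-a_j)\ge 1-\sum a_j$ yields $0\le 1-c\le \tfrac{r(r+1)}{2(n+1)}$, which, after $|f^{(r)}(\xi_i)|\le\|f^{(r)}\|$, accounts for the first term $\tfrac{r(r+1)}{2(n+1)}\|f^{(r)}\|$. For the second term I would bound $|\xi_i-\eta_i|$ by the modulus argument $\tfrac{r+1}{n+1}$ and invoke $\sum_i p_{n-r,i}(x)=1$.

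The step I expect to be the main obstacle is this last estimate $|\xi_i-\eta_i|\le\tfrac{r+1}{n+1}$, because the two mean value points live in intervals with different endpoints and different denominators ($n+1$ versus $n-r+1$), unlike the cleaner nesting available in the Bernstein case. The plan is to show that both $\xi_i$ and $\eta_i$ lie in the common interval $\left[\tfrac{i}{n+1},\tfrac{i+r+1}{n+1}\right]$ of length $\tfrac{r+1}{n+1}$. The lower containment is immediate from $n-r+1\le n+1$; the upper containment reduces to $\tfrac{i+1}{n-r+1}\le\tfrac{i+r+1}{n+1}$, which after clearing denominators is equivalent to $r(i+r-n)\le 0$ and hence holds because $0\le i\le n-r$. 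This yields $|f^{(r)}(\xi_i)-f^{(r)}(\eta_i)|\le\omega\big(f^{(r)},\tfrac{r+1}{n+1}\big)$ and completes the bound.
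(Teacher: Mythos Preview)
Your proof is correct and follows essentially the same route as the paper: pass to $B_{n+1}F$, express $(K_nf)^{(r)}$ via forward/divided differences and the mean value theorem, apply the integral mean value theorem to $K_{n-r}(f^{(r)})$, and split the summand as $(c-1)f^{(r)}(\xi_i)+\bigl(f^{(r)}(\xi_i)-f^{(r)}(\eta_i)\bigr)$. In fact you are slightly more thorough than the paper, which records the ranges of $\xi_i$ and $\eta_i$ but does not explicitly carry out the verification $\tfrac{i+1}{n-r+1}\le\tfrac{i+r+1}{n+1}$ that you supply.
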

\begin{proof} The $r^{th}$ derivative of Kantorovich polynomials can be written as follows:
	\begin{align*}
	\left(K_n(f;x)\right)^{(r)}&=\left(B_{n+1}F(x)\right)^{(r+1)}=\sum_{i=0}^{n-r}p_{n-r,i}(x)\left\{(n+1)n(n-1)\dots(n-r+1)\Delta_{\frac{1}{n+1}}^{r+1}F\left(\frac{i}{n+1}\right)\right\}\\
	&=\displaystyle\sum_{i=0}^{n-r}p_{n-r,i}(x)\frac{(n+1)n(n-1)\dots (n-r+1)}{(n+1)^{r+1}}(r+1)!\left[\frac{i}{n+1},\dots,\frac{i+r+1}{n+1};F\right]\\
	&=\displaystyle\sum_{i=0}^{n-r}p_{n-r,i}(x)\frac{(n+1)n(n-1)\dots(n-r+1)}{(n+1)^{r+1}}F^{(r+1)}(\xi_i)\\
	&=\displaystyle\sum_{i=0}^{n-r}p_{n-r,i}(x)\frac{(n+1)n(n-1)\dots (n-r+1)}{(n+1)^{r+1}}f^{(r)}(\xi_i).
	\end{align*}
For the differences of Kantorovich operators  we obtain
	\begin{align*}
	&\left(K_n(f;x)\right)^{(r)}-K_{n-r}\left(f^{(r)}(x)\right)\\
	&=\displaystyle\sum_{i=0}^{n-r}p_{n-r,i}(x)\frac{n(n-1)\dots(n-r+1)}{(n+1)^r}f^{(r)}(\xi_i)-\displaystyle\sum_{i=0}^{n-r}(n-r+1)p_{n-r,i}(x)\int_{\frac{i}{n-r+1}}^{\frac{i+1}{n-r+1}}f^{(r)}(t)dt\end{align*}
	\begin{align*}
	&=\displaystyle\sum_{i=0}^{n-r}p_{n-r,i}(x)\frac{n(n-1)\dots(n-r+1)}{(n+1)^r}f^{(r)}(\xi_i)-\displaystyle\sum_{i=0}^{n-r}p_{n-r,i}(x)f^{(r)}(\eta_i)\\
	&=\displaystyle\sum_{i=0}^{n-r}p_{n-r,i}(x)\left\{\left(\frac{n(n-1)\dots(n-r+1)}{(n+1)^r}-1\right)f^{(r)}(\xi_i)+f^{(r)}(\xi_i)-f^{(r)}(\eta_i)\right\},
	\end{align*}
	where $\displaystyle\frac{i}{n+1}\leq\xi_i\leq\frac{i+r+1}{n+1}$ and $\displaystyle\frac{i}{n-r+1}\leq\eta_i\leq\frac{i+1}{n-r+1}$.\\
	Let us remark that
	$$ 0\leq 1-\displaystyle\frac{n(n-1)\dots(n-r+1)}{(n+1)^r}\leq\frac{r(r+1)}{2(n+1)}.$$
	Therefore,
	$$\left\| \left(K_nf\right)^{(r)}-K_{n-r}\left(f^{(r)}\right)\right \|\leq \displaystyle\frac{(r+1)r}{2(n+1)}\| f^{(r)}\|+\omega\left(f^{(r)},\frac{r+1}{n+1}\right).$$
\end{proof}
In order to extend the above result we will define
 the operator
$$Q_n^kf:=\displaystyle\frac{n^k(n-k)!}{n!}\left(B_n(f^{(-k)})\right)^{(k)},\,\,f\in C[0,1],$$
where $f^{(-k)}$ is an antiderivative of order $k$ for the function $f$.

\begin{theorem}
	For the  operators $Q_n^k$ the following property holds:
	$$\left\| \left(Q_n^kf\right)^{(r)}-Q_{n-r}^k\left(f^{(r)}\right)\right \|\leq \displaystyle\frac{(2k+r-1)r}{2n}\| f^{(r)}\|+\omega\left(f^{(r)},\frac{k+r}{n}\right), \,\, f\in C^{r}[0,1],\,\,r=0,1,\dots n.  $$
\end{theorem}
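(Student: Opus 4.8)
The plan is to reduce everything to an explicit point-evaluation representation of $Q_n^k$ and then mimic the telescoping-and-split argument already used in Theorems 2.1 and 2.2. First I would record the classical formula for the higher derivatives of a Bernstein polynomial,
$$\left(B_n g\right)^{(m)}(x)=\frac{n!}{(n-m)!}\sum_{i=0}^{n-m}p_{n-m,i}(x)\,\Delta_{1/n}^{m}g\!\left(\tfrac{i}{n}\right),$$
and apply it with $g=f^{(-k)}$ and $m=k+r$, since $(Q_n^k f)^{(r)}=\frac{n^k(n-k)!}{n!}\left(B_n(f^{(-k)})\right)^{(k+r)}$. Converting the forward difference into a divided difference and invoking the mean value theorem for divided differences gives $\Delta_{1/n}^{k+r}f^{(-k)}(i/n)=f^{(r)}(\xi_i)/n^{k+r}$ for some $\xi_i\in[i/n,(i+k+r)/n]$, because $(f^{(-k)})^{(k+r)}=f^{(r)}$. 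After the factorial prefactors cancel this yields the clean representation
$$(Q_n^k f)^{(r)}(x)=\frac{(n-k)!}{(n-k-r)!\,n^r}\sum_{i=0}^{n-k-r}p_{n-k-r,i}(x)\,f^{(r)}(\xi_i).$$

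Next I would specialise the same identity with $r=0$, $n$ replaced by $n-r$, and $f$ replaced by $f^{(r)}$, to obtain $Q_{n-r}^k(f^{(r)})(x)=\sum_{i=0}^{n-k-r}p_{n-k-r,i}(x)\,f^{(r)}(\eta_i)$ with $\eta_i\in[i/(n-r),(i+k)/(n-r)]$. The crucial observation is that both expansions run over the same basis $p_{n-k-r,i}$, so the difference collapses into a single sum; writing $c:=\frac{(n-k)!}{(n-k-r)!\,n^r}=\prod_{j=0}^{r-1}\left(1-\frac{k+j}{n}\right)$, I would split each bracket exactly as in the earlier proofs,
$$c\,f^{(r)}(\xi_i)-f^{(r)}(\eta_i)=(c-1)f^{(r)}(\xi_i)+\big(f^{(r)}(\xi_i)-f^{(r)}(\eta_i)\big).$$

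The two remaining estimates are then routine. For the coefficient, the Weierstrass product inequality $\prod_{j=0}^{r-1}(1-a_j)\ge 1-\sum_{j=0}^{r-1}a_j$ with $a_j=(k+j)/n\in[0,1]$ gives $0\le 1-c\le \frac{1}{n}\sum_{j=0}^{r-1}(k+j)=\frac{(2k+r-1)r}{2n}$. For the modulus term I would show that $\xi_i$ and $\eta_i$ lie in one common interval of length $(k+r)/n$: indeed $i/n\le i/(n-r)$, and since $i+k+r\le n$ for $0\le i\le n-k-r$ one checks $(i+k)/(n-r)\le (i+k+r)/n$, so both points sit in $[i/n,(i+k+r)/n]$ and hence $|f^{(r)}(\xi_i)-f^{(r)}(\eta_i)|\le \omega\!\left(f^{(r)},\frac{k+r}{n}\right)$. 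Summing against $p_{n-k-r,i}(x)$, which is nonnegative and sums to $1$, and passing to the supremum over $x$ completes the argument. The main obstacle is genuinely the first step: extracting the point-evaluation form and, in particular, pinning down the location of the intermediate node $\xi_i$ precisely enough that the comparison interval of length $(k+r)/n$ can be identified; once that is in place everything parallels Theorems 2.1 and 2.2. As a sanity check, $k=0$ gives $Q_n^0=B_n$ and reproduces Theorem 2.1 exactly.
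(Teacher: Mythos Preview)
Your proposal is correct and follows essentially the same route as the paper: both proofs expand $(Q_n^k f)^{(r)}$ and $Q_{n-r}^k(f^{(r)})$ via the derivative formula for Bernstein polynomials, rewrite the forward differences as divided differences, apply the mean value theorem to obtain intermediate points $\xi_i\in[i/n,(i+k+r)/n]$ and $\eta_i\in[i/(n-r),(i+k)/(n-r)]$, and then split $c\,f^{(r)}(\xi_i)-f^{(r)}(\eta_i)$ into $(c-1)f^{(r)}(\xi_i)+\big(f^{(r)}(\xi_i)-f^{(r)}(\eta_i)\big)$. Your write-up is in fact slightly more detailed than the paper's, which simply asserts the two final bounds; you supply the Weierstrass product inequality for $1-c\le\frac{(2k+r-1)r}{2n}$ and the explicit verification that $[\,i/(n-r),(i+k)/(n-r)\,]\subset[\,i/n,(i+k+r)/n\,]$ needed for the modulus term.
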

\begin{proof} The above inequality follows from
	\begin{align*}
	 \left|(Q_n^kf)^{(r)}-Q_{n-r}^k(f^{(r)})\right|&=\left| \displaystyle\frac{n^k(n-k)!}{n!}\left(B_n(f^{(-k)})\right)^{(k+r)}-\frac{(n-r)^k(n-r-k)!}{(n-r)!}\left(B_{n-r}f^{(r-k)}\right)^{(k)} \right|\\
	& =\left| \displaystyle\frac{n^k(n-k)!}{n!}\frac{n!}{(n-k-r)!}\sum_{i=0}^{n-k-r}p_{n-k-r,i}\Delta_{\frac{1}{n}}^{k+r}f^{(-k)}\left(\frac{i}{n}\right)\right.\\
	&\left.
-	\displaystyle\frac{(n-r)^k(n-r-k)!}{(n-r)!}\frac{(n-r)!}{(n-k-r)!}\sum_{i=0}^{n-k-r}p_{n-k-r,i}\Delta_{\frac{1}{n-r}}^{k}f^{(r-k)}\left(\frac{i}{n-r}\right) \right|\\
&=\left|\displaystyle\sum_{i=0}^{n-k-r}p_{n-k-r,i}\left\{\displaystyle\frac{n^k(n-k)!}{(n-k-r)!}\frac{(k+r)!}{n^{k+r}}\left[\frac{i}{n},\dots,\frac{i+k+r}{n};f^{(-k)}\right]\right.\right.\\
&\left.\left. -(n-r)^k\frac{k!}{(n-r)^k}\left[\frac{i}{n-r},\dots,\frac{i+k}{n-k}; f^{(r-k)}\right]\right\}\right|\\
&=\left|\displaystyle\sum_{i=0}^{n-k-r}p_{n-k-r,i}\left\{\frac{(n-k)!}{(n-k-r)!n^r}f^{(r)}(\xi_i)-f^{(r)}(\eta_i)\right\}\right|\\
&\leq\displaystyle\sum_{i=0}^{n-k-r}\!p_{n\!-\!k\!-\!r,i}\left|\left(\frac{(n\!-\!k)\dots(n\!-\!k\!-\!r\!+\!1)}{n^r}\!-\!1\right)f^{(r)}(\xi_i)\!+\!f^{(r)}(\xi_i)\!-\!f^{(r)}(\eta_i)\right|\end{align*}
\begin{align*}
&\leq \left|\frac{(n-k)\dots (n-k-r+1)}{n^r}-1\right|\,\left\|f^{(r)}\right\|+\omega\left(f^{(r)},\frac{k+r}{n}\right)\\
&\leq \displaystyle\frac{(2k+r-1)r}{2n}\,\left\|f^{(r)}\right\|+\omega\left(f^{(r)},\frac{k+r}{n}\right),
	\end{align*}
	where $\displaystyle\frac{i}{n}\leq \xi_i\leq \frac{i+k+r}{n}$ and $\displaystyle\frac{i}{n-r}\leq\eta_i\leq\frac{i+k}{n-r}$.
	\end{proof}

\section{The Durrmeyer operators with Jacobi weights}

The classical Durrmeyer operators are the integral modification of Bernstein operators so as to approximate Lebesgue integrable functions defined on the interval $[0,1]$. These operators were introduced by Durrmeyer \cite{1a} and, independently, by Lupa\c s \cite{dif_A19} and are defined as
 \begin{eqnarray}\label{X}
 M_n(f,x) =(n+1) \sum\limits_{k=0}^{n} p_{n,k}(x)  \int\limits_{0}^{1} p_{n,k}(t) \ f(t) \, dt , \ \ x \in [0,1].
 \end{eqnarray}
  
 Let $w^{(\alpha,\beta)}(x)=x^{\alpha}(1-x)^{\beta},\,\alpha,\beta>-1$ be a Jacobi weight function on the interval $(0,1)$ and $L_{p}^{w^{(\alpha,\beta)}}[0,1]$ be the space of Lebesgue-measurable functions $f$ on $[0,1]$ for which the weighted $L_p$-norm is finite.   The Durrmeyer operators can be generalized as follows
 $$ M_n^{(\alpha,\beta)}=\displaystyle\sum_{k=0}^n p_{n,k}(x)\displaystyle\frac{1}{c_{n,k}^{(\alpha,\beta)}}\int_0^1p_{n,k}(t)w^{(\alpha,\beta)}(t)f(t)dt, $$
 where $c_{n,k}^{(\alpha,\beta)}=\displaystyle\int_0^1p_{n,k}(t)w^{(\alpha,\beta)}(t)dt$ and $f\in L_1^{(\alpha,\beta)}[0,1]$. See \cite{D11} and \cite{Paltanea}.
 
 The classical  Durrmeyer operators $M_n$ are obtained for $\alpha=\beta=0$.

In order to give the estimate for the difference of the Durrmeyer
operators  we need the following result (see, e.g., \cite{ana}):
\begin{lemma}
	Let $F:C[0,1]\to\mathbb{R}$ be a positive linear functional
	with $F(e_0)=1$ and $F(e_1)=b$. Then, for
	each $\varphi\in C^2[0,1]$ there is $\xi\in[0,1]$ such that
	$$ F(\varphi)-\varphi(b)=\left(F(e_2)-e_2(b)\right)\displaystyle\frac{\varphi^{\prime\prime}(\xi)}{2}, $$
	where $e_r(x)=x^r,\, r=0,1,\dots$.
\end{lemma}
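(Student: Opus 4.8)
The plan is to reduce the statement to a Taylor expansion of $\varphi$ about the point $b=F(e_1)$, followed by a positivity estimate and the intermediate value theorem, in the spirit of the classical mean value theorem for integrals. First I would write Taylor's formula with Lagrange remainder: for every $t\in[0,1]$ there is a point $\eta_t$ between $t$ and $b$ such that $\varphi(t)=\varphi(b)+\varphi'(b)(t-b)+\tfrac12(t-b)^2\varphi''(\eta_t)$. Accordingly the remainder $R:=\varphi-\varphi(b)e_0-\varphi'(b)(e_1-be_0)$ belongs to $C[0,1]$, being $\varphi$ minus a first-degree polynomial, so $F(R)$ is well defined. Using linearity together with the normalizations $F(e_0)=1$ and $F(e_1)=b$, the linear term contributes $\varphi'(b)\bigl(F(e_1)-bF(e_0)\bigr)=0$, whence $F(R)=F(\varphi)-\varphi(b)$.

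Next I would exploit positivity. Let $m=\min_{[0,1]}\varphi''$ and $M=\max_{[0,1]}\varphi''$, which exist since $\varphi''$ is continuous. The Lagrange form of the remainder gives the pointwise bounds
$$\tfrac{m}{2}(t-b)^2\le R(t)\le \tfrac{M}{2}(t-b)^2,\qquad t\in[0,1].$$
Applying the positive linear functional $F$ and setting $A:=F\bigl((e_1-be_0)^2\bigr)=F(e_2)-2bF(e_1)+b^2F(e_0)=F(e_2)-e_2(b)$, this yields $\tfrac{m}{2}A\le F(R)\le \tfrac{M}{2}A$, where $A\ge0$ again by positivity applied to the nonnegative function $(e_1-be_0)^2$.

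Finally I would invoke the intermediate value theorem. If $A>0$, then $2F(R)/A\in[m,M]$, and since $\varphi''$ is continuous on the connected set $[0,1]$ it attains every value between its minimum and maximum, so there exists $\xi\in[0,1]$ with $\varphi''(\xi)=2F(R)/A$; rearranging and using $F(R)=F(\varphi)-\varphi(b)$ gives $F(\varphi)-\varphi(b)=\bigl(F(e_2)-e_2(b)\bigr)\varphi''(\xi)/2$, as claimed. The only point requiring care is the degenerate case $A=0$: then the sandwich forces $F(R)=0$, so both sides of the identity vanish and any $\xi\in[0,1]$ serves. I expect this degenerate case, together with the verification that $F$ may legitimately be applied to the continuous remainder $R$, to be the only delicate steps; the remainder is the standard positivity-plus-IVT argument.
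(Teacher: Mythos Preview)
Your argument is correct and complete: the Taylor expansion about $b=F(e_1)$ (which indeed lies in $[0,1]$ by positivity and $F(e_0)=1$), followed by the sandwich $\tfrac{m}{2}A\le F(R)\le \tfrac{M}{2}A$ and the intermediate value theorem, is exactly the standard proof of this Peano--Voronovskaja--type identity, and your treatment of the degenerate case $A=0$ is accurate.

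As for comparison with the paper: the paper does not actually prove this lemma. It is stated with a reference (``see, e.g., \cite{ana}'') and then used as a tool in the proofs of Lemma~\ref{l1} and Theorem~\ref{t2}. So there is no paper-side argument to compare against; your self-contained proof simply supplies what the paper outsources to the literature.
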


	Let $\varphi\in C^2[0,1]$. With fixed $0\leq r\leq n$ and $0\leq k\leq n-r$, consider the functional
\begin{align*}A_{n,k}^{(\alpha,\beta)}(\varphi)&:=\displaystyle (n+\alpha+\beta+r+1)\int_0^1 p_{n+\beta+\alpha+r,k+\alpha+r}(t)\varphi(t)dt\\
&- (n+\alpha+\beta-r+1)\int_0^1p_{n-r+\alpha+\beta,k+\alpha}(t)\varphi(t)dt
=B_{n,k}^{(\alpha,\beta)}(\varphi)-C_{n,k}^{(\alpha,\beta)}(\varphi),  \end{align*}
where
	\begin{align*}
&B_{n,k}^{(\alpha,\beta)}(\varphi)=(n+\alpha+\beta+r+1)\int_0^1p_{n+\beta+\alpha+r,k+\alpha+r}(t)\varphi(t)dt,\\
&C_{n,k}^{(\alpha,\beta)}(\varphi)=(n+\alpha+\beta-r+1)\int_0^1p_{n-r+\alpha+\beta,k+\alpha}(t)\varphi(t)dt.
\end{align*}

\begin{lemma}\label{l1}
	The functional $A_{n,k}^{(\alpha,\beta)}$ verifies
	$$ |A_{n,k}^{(\alpha,\beta)}(\varphi)|\leq \displaystyle\frac{1}{4}\|\varphi^{\prime\prime}\|\frac{n+\alpha+\beta+3}{(n+\alpha+\beta+3)^2-r^2}+\omega\left(\varphi,\frac{r(n-r+|\beta-\alpha|)}{(n+2+\alpha+\beta)^2-r^2}\right),  $$
	where $\omega$ is the first order modulus of
	continuity.
\end{lemma}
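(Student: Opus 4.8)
The plan is to recognize that both $B_{n,k}^{(\alpha,\beta)}$ and $C_{n,k}^{(\alpha,\beta)}$ are positive linear functionals of unit mass, to which the preceding Lemma applies, and then to subtract the two resulting identities. First I would check the total mass: using the Beta integral $\int_0^1 p_{N,j}(t)\,dt=\frac{1}{N+1}$ (valid for real parameters via the Gamma function), the normalizing constants in front of each integral are exactly the reciprocals of these integrals, so $B_{n,k}^{(\alpha,\beta)}(e_0)=C_{n,k}^{(\alpha,\beta)}(e_0)=1$; since the kernels are nonnegative on $[0,1]$, both functionals are positive. Hence the Lemma applies to each with its own barycenter $b_B:=B_{n,k}^{(\alpha,\beta)}(e_1)$ and $b_C:=C_{n,k}^{(\alpha,\beta)}(e_1)$.

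Next I would compute the first and second moments by the same Beta integrals. Writing $s:=k+\alpha+1$ and $m:=n+\alpha+\beta+2$ to compress the algebra, this gives $b_B=\frac{s+r}{m+r}$, $b_C=\frac{s}{m-r}$, together with the variances
\[
V_B:=B_{n,k}^{(\alpha,\beta)}(e_2)-b_B^2=\frac{(s+r)(m-s)}{(m+r)^2(m+r+1)},\qquad
V_C:=C_{n,k}^{(\alpha,\beta)}(e_2)-b_C^2=\frac{s(m-r-s)}{(m-r)^2(m-r+1)},
\]
both nonnegative, being genuine variances of positive unit-mass functionals. Applying the Lemma to each functional and subtracting yields, for some $\xi_1,\xi_2\in[0,1]$,
\[
A_{n,k}^{(\alpha,\beta)}(\varphi)=\big(\varphi(b_B)-\varphi(b_C)\big)+\tfrac12\big(V_B\,\varphi''(\xi_1)-V_C\,\varphi''(\xi_2)\big).
\]

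The first bracket is controlled by the modulus of continuity, $|\varphi(b_B)-\varphi(b_C)|\le\omega(\varphi,|b_B-b_C|)$, and a direct computation gives $b_B-b_C=\frac{r(m-r-2s)}{m^2-r^2}$, where $m-r-2s=n-r-2k+\beta-\alpha$. Since $0\le k\le n-r$ forces $|n-r-2k|\le n-r$, one gets $|b_B-b_C|\le\frac{r(n-r+|\beta-\alpha|)}{(n+\alpha+\beta+2)^2-r^2}$, which (by monotonicity of $\omega$) is exactly the argument appearing in the statement. For the second bracket, since $V_B,V_C\ge0$ we have $\frac12|V_B\varphi''(\xi_1)-V_C\varphi''(\xi_2)|\le\frac{\|\varphi''\|}{2}(V_B+V_C)$, so it remains only to bound $V_B$ and $V_C$ uniformly in $s$.

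This uniform bound is the step I expect to be the crux. Each numerator, $(s+r)(m-s)$ and $s(m-r-s)$, is a concave quadratic in $s$, hence bounded by its vertex value $\frac{(m+r)^2}{4}$ and $\frac{(m-r)^2}{4}$ respectively; this yields $V_B\le\frac{1}{4(m+r+1)}$ and $V_C\le\frac{1}{4(m-r+1)}$, so that
\[
\frac{\|\varphi''\|}{2}(V_B+V_C)\le\frac{\|\varphi''\|}{8}\left(\frac{1}{m+r+1}+\frac{1}{m-r+1}\right)=\frac{\|\varphi''\|}{4}\cdot\frac{m+1}{(m+1)^2-r^2},
\]
which is precisely the first term of the asserted estimate once $m+1=n+\alpha+\beta+3$ is substituted. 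Combining the two brackets completes the proof. The delicate points are getting the moment formulas right and noticing that the two quadratics peak at the same abscissa $s=\frac{m-r}{2}$, which is what makes the two variance contributions recombine into the single clean fraction $\frac{m+1}{(m+1)^2-r^2}$ rather than a messier, $k$-dependent expression.
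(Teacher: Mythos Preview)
Your proof is correct and follows essentially the same route as the paper: you apply the preceding lemma separately to the unit-mass positive functionals $B_{n,k}^{(\alpha,\beta)}$ and $C_{n,k}^{(\alpha,\beta)}$, compute their first and second moments via Beta integrals, bound the difference of barycenters by the modulus-of-continuity term and the sum of variances by the quadratic-vertex estimate. The only cosmetic difference is your use of the abbreviations $s=k+\alpha+1$, $m=n+\alpha+\beta+2$, which tidies the algebra; the paper carries out exactly the same computations in unabbreviated form and arrives at the same two bounds.
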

\begin{proof}
	By simple calculations, we get
\begin{align*}
& B_{n,k}^{(\alpha,\beta)}(e_0)=1,\, B_{n,k}^{(\alpha,\beta)}(e_1)=\displaystyle\frac{k+\alpha+r+1}{n+r+\alpha+\beta+2},\, B_{n,k}^{(\alpha,\beta)}(e_2)=\displaystyle\frac{(k\!+\!\alpha\!+\!r\!+\!1)(k\!+\!\alpha\!+\!r\!+\!2)}{(n\!+\!r\!+\!\alpha\!+\!\beta\!+\!2)(n\!+\!r\!+\!\alpha\!+\!\beta\!+\!3)},\\
&C_{n,k}^{(\alpha,\beta)}(e_0)=1,\, C_{n,k}^{(\alpha,\beta)}(e_1)=\frac{k+\alpha+1}{n-r+\alpha+\beta+2},\, C_{n,k}^{(\alpha,\beta)}(e_2)=\displaystyle\frac{(k\!+\!\alpha\!+\!1)(k\!+\!\alpha\!+\!2)}{(n\!-\!r\!+\!\alpha\!+\!\beta\!+\!2)(n\!-\!r\!+\!\alpha\!+\!\beta\!+\!3)}.
\end{align*}
Therefore,
\begin{align*}
&|A_{n,k}^{(\alpha,\beta)}(\varphi)|=|B_{n,k}^{(\alpha,\beta)}(\varphi)-C_{n,k}^{(\alpha,\beta)}(\varphi)|\leq\left|B_{n,k}^{(\alpha,\beta)}(\varphi)-\varphi\left(\frac{k+\alpha+r+1}{n+r+\alpha+\beta+2}\right)\right|\\
&+\left|C_{n,k}^{(\alpha,\beta)}(\varphi)-\varphi\left(\frac{k+\alpha+1}{n-r+\alpha+\beta+2}\right)\right|
+\left|\varphi\left(\frac{k+\alpha+r+1}{n+r+\alpha+\beta+2}\right)-\varphi\left(\frac{k+\alpha+1}{n-r+\alpha+\beta+2}\right)\right|\\
&\leq\displaystyle\frac{1}{2}\|\varphi^{\prime\prime}\|\left(\frac{(k+\alpha+r+1)(\beta+1+n-k)}{(2+\alpha+\beta+n+r)^2(\alpha+\beta+n+r+3)}+\frac{(k+\alpha+1)(\beta+1+n-r-k)}{(2+\alpha+\beta+n-r)^2(\alpha+\beta+n-r+3)}\right)\\
&+\omega\left(\varphi,\frac{r|n-r+\beta-\alpha-2k|}{(2+\alpha+\beta+n+r)(2+\alpha+\beta+n-r)}\right)\\
&\leq \displaystyle\frac{1}{4}\|\varphi^{\prime\prime}\|\frac{n+\alpha+\beta+3}{(n+\alpha+\beta+3)^2-r^2}+\omega\left(\varphi,\frac{r(n-r+|\beta-\alpha|)}{(n+2+\alpha+\beta)^2-r^2}\right).
\end{align*}
\end{proof}

\begin{theorem}\label{t2A}
	For Durrmeyer operators  with Jacobi weights the following property holds:
	\begin{align*}
	&\left\|\displaystyle\frac{\Gamma(n+\alpha+\beta+r+2)\Gamma(n-r+1)}{\Gamma(n+\alpha+\beta+2)\Gamma(n+1)}\left(M_n^{(\alpha,\beta)}f\right)^{(r)}-M_{n-r}^{(\alpha,\beta)}\left(f^{(r)}\right)\right\|\\
	&\leq \displaystyle\frac{1}{4}\|f^{(r+2)}\|\frac{n+\alpha+\beta+3}{(n+\alpha+\beta+3)^2-r^2}+\omega\left(f^{(r)},\frac{r(n-r+|\beta-\alpha|)}{(n+2+\alpha+\beta)^2-r^2}\right), \end{align*}
	where $f\in C^{r+2}[0,1],\,\,r=1,\dots, n. $
\end{theorem}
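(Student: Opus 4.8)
The plan is to reduce the theorem to Lemma \ref{l1} by showing that, after the indicated $\Gamma$-normalization, the difference is a Bernstein-weighted average of the functionals $A_{n,k}^{(\alpha,\beta)}$ evaluated at $f^{(r)}$. Concretely, I would aim to establish the representation
\[
\frac{\Gamma(n+\alpha+\beta+r+2)\,\Gamma(n-r+1)}{\Gamma(n+\alpha+\beta+2)\,\Gamma(n+1)}\bigl(M_n^{(\alpha,\beta)}f\bigr)^{(r)}(x)-M_{n-r}^{(\alpha,\beta)}\bigl(f^{(r)},x\bigr)=\sum_{k=0}^{n-r}p_{n-r,k}(x)\,A_{n,k}^{(\alpha,\beta)}\bigl(f^{(r)}\bigr).
\]
Once this is in hand, since $p_{n-r,k}(x)\ge 0$ and $\sum_{k=0}^{n-r}p_{n-r,k}(x)=1$, taking absolute values and the supremum over $x\in[0,1]$ bounds the left-hand side by $\max_k|A_{n,k}^{(\alpha,\beta)}(f^{(r)})|$, and Lemma \ref{l1} applied with $\varphi=f^{(r)}$ (whence $\varphi''=f^{(r+2)}$, which is exactly why the hypothesis $f\in C^{r+2}$ is needed) yields the stated bound.

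To obtain the representation I would proceed in three moves. First, write $M_n^{(\alpha,\beta)}(f,x)=\sum_{k=0}^n p_{n,k}(x)\,b_{n,k}$ with coefficients $b_{n,k}=\int_0^1\rho_{n,k}(t)f(t)\,dt$, where $\rho_{n,k}$ is the Beta density $\frac{\Gamma(n+\alpha+\beta+2)}{\Gamma(k+\alpha+1)\Gamma(n-k+\beta+1)}t^{k+\alpha}(1-t)^{n-k+\beta}$ obtained from $p_{n,k}w^{(\alpha,\beta)}/c_{n,k}^{(\alpha,\beta)}$. Since the $b_{n,k}$ do not depend on $x$, the classical formula for the $r$th derivative of a Bernstein polynomial gives
\[
\bigl(M_n^{(\alpha,\beta)}f\bigr)^{(r)}(x)=\frac{n!}{(n-r)!}\sum_{i=0}^{n-r}p_{n-r,i}(x)\,\Delta^r b_{n,i},
\]
where $\Delta^r$ is the $r$th forward difference in the index $i$.

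The main obstacle, and the heart of the argument, is the identity
\[
\frac{\Gamma(n+\alpha+\beta+r+2)}{\Gamma(n+\alpha+\beta+2)}\,\Delta^r b_{n,i}=B_{n,i}^{(\alpha,\beta)}\bigl(f^{(r)}\bigr).
\]
I would prove it by iterated integration by parts, transferring the $r$ derivatives from $f$ onto the Beta density. The engine is the recurrence $\frac{d}{dt}\sigma_i^{[s]}(t)=(n+\alpha+\beta+s+1)\bigl[\sigma_i^{[s-1]}(t)-\sigma_{i+1}^{[s-1]}(t)\bigr]$, where $\sigma_i^{[s]}$ denotes the Beta$(i+\alpha+s+1,\,n+\beta-i+1)$ density; this follows from the product rule applied to $t^{i+\alpha+s}(1-t)^{n+\beta-i}$ together with the $\Gamma$-identities that absorb the linear factors $(i+\alpha+s)$ and $(n+\beta-i)$. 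Setting $\Phi_s(i):=\int_0^1\sigma_i^{[s]}(t)f^{(s)}(t)\,dt$, one integration by parts (the boundary terms vanish since the exponents $i+\alpha+s$ of $t$ and $n+\beta-i$ of $1-t$ are strictly positive for $0\le i\le n-r$, $1\le s\le r$) gives $\Phi_s(i)=(n+\alpha+\beta+s+1)\Delta_i\Phi_{s-1}(i)$. Iterating from $s=r$ down to $\Phi_0(i)=b_{n,i}$ accumulates exactly the rising product $\frac{\Gamma(n+\alpha+\beta+r+2)}{\Gamma(n+\alpha+\beta+2)}$ while turning $f^{(r)}$ into $f$ and producing $\Delta^r b_{n,i}$. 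Recognizing $\Phi_r(i)=B_{n,i}^{(\alpha,\beta)}(f^{(r)})$, since $\sigma_i^{[r]}$ is precisely the normalized kernel $(n+\alpha+\beta+r+1)\,p_{n+\alpha+\beta+r,\,i+\alpha+r}$, closes the identity. Substituting it into the derivative formula and using $\frac{\Gamma(n-r+1)}{\Gamma(n+1)}\cdot\frac{n!}{(n-r)!}=1$ produces the $B$-part of the representation.

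Finally, the $C$-part is a direct rewriting: expanding $M_{n-r}^{(\alpha,\beta)}(f^{(r)},x)=\sum_{k=0}^{n-r}p_{n-r,k}(x)\int_0^1\rho_{n-r,k}(t)f^{(r)}(t)\,dt$ and checking that $(n+\alpha+\beta-r+1)\,p_{n-r+\alpha+\beta,\,k+\alpha}(t)$ coincides with the Beta density $\rho_{n-r,k}(t)$ identifies the $k$th coefficient with $C_{n,k}^{(\alpha,\beta)}(f^{(r)})$. Subtracting the two representations yields the displayed formula with $A_{n,k}^{(\alpha,\beta)}=B_{n,k}^{(\alpha,\beta)}-C_{n,k}^{(\alpha,\beta)}$, and the proof concludes through Lemma \ref{l1} as described above.
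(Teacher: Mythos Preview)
Your proposal is correct and follows the same overall strategy as the paper: both establish the representation
\[
\frac{\Gamma(n+\alpha+\beta+r+2)\Gamma(n-r+1)}{\Gamma(n+\alpha+\beta+2)\Gamma(n+1)}\bigl(M_n^{(\alpha,\beta)}f\bigr)^{(r)}(x)-M_{n-r}^{(\alpha,\beta)}\bigl(f^{(r)};x\bigr)=\sum_{k=0}^{n-r}p_{n-r,k}(x)\,A_{n,k}^{(\alpha,\beta)}\bigl(f^{(r)}\bigr)
\]
and then apply Lemma~\ref{l1} with $\varphi=f^{(r)}$. The only methodological difference is in how the $B$-part is obtained: the paper invokes the identity of Abel and Heilmann, $\bigl(M_n^{(\alpha,\beta)}f\bigr)^{(r)}=\frac{n!/(n-r)!}{(n+\alpha+\beta+2)\cdots(n+\alpha+\beta+r+1)}\,M_{n-r}^{(\alpha+r,\beta+r)}f^{(r)}$, and then rewrites it in the displayed form ``by simple calculations'', whereas you prove that identity from scratch via iterated integration by parts against the Beta kernels. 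Your recurrence $(\sigma_i^{[s]})'=(n+\alpha+\beta+s+1)\bigl[\sigma_i^{[s-1]}-\sigma_{i+1}^{[s-1]}\bigr]$ and the vanishing of the boundary terms are correct (for $1\le s\le r$ and the relevant $j$, the exponents $j+\alpha+s$ and $n+\beta-j$ are indeed strictly positive), so your argument is a self-contained rederivation of what the paper takes from the literature. The benefit of your route is that the proof becomes independent of the cited reference; the paper's route is shorter since the Abel--Heilmann formula is already available.
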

\begin{proof}
		In \cite{Abel}, Abel et al. proved the following identity for the derivatives of $M_n^{(\alpha,\beta)}f$:
	\begin{equation}\label{Abel} \left(M_n^{(\alpha,\beta)}f\right)^{(r)}=\displaystyle\frac{n(n-1)\dots (n-r+1)}{(n+\alpha+\beta+2)(n+\alpha+\beta+3)\dots (n+\alpha+\beta+r+1)} M_{n-r}^{(\alpha+r,\beta+r)}f^{(r)},\end{equation}
	where $f^{(r)}\in L_1^{w^{(\alpha+r,\beta+r)}}[0,1]$ and $r\leq n$.
	
	By simple calculations it can be shown that
	$$ \left( M_n^{(\alpha,\beta)}(f;x)\right)^{(r)}=\displaystyle\frac{\Gamma(n+\alpha+\beta+2)\Gamma(n+1)}{\Gamma(n+\alpha+\beta+r+1)\Gamma(n-r+1)}\sum_{k=0}^{n-r}p_{n-r,k}(x)\int_0^1p_{n+\beta+\alpha+r,k+\alpha+r}(t)f^{(r)}(t)dt.  $$
	We can write
	\begin{align*}
	&\displaystyle\frac{\Gamma(n+\alpha+\beta+r+2)\Gamma(n-r+1)}{\Gamma(n+\alpha+\beta+2)\Gamma(n+1)}\left(M_n^{(\alpha,\beta)}(f;x)\right)^{(r)}-M_{n-r}^{(\alpha,\beta)}\left(f^{(r)};x\right)\\
	&=(n+\alpha+\beta+r+1)\sum_{k=0}^{n-r}p_{n-r,k}(x)\int_0^1p_{n+\beta+\alpha+r,k+\alpha+r}(t)f^{(r)}(t)dt\\
	&-\displaystyle\sum_{k=0}^{n-r}p_{n-r,k}(x)\frac{1}{\int_0^1p_{n-r,k}(t)t^{\alpha}(1-t)^\beta dt}\int_0^1p_{n-r,k}(t)t^{\alpha}(1-t)^{\beta}f^{(r)}(t)dt\\
	&=\displaystyle\sum_{k=0}^{n-r}p_{n-r,k}(x)\left\{(n+\alpha+\beta+r+1)\int_0^1 p_{n+\beta+\alpha+r,k+\alpha+r}(t)f^{(r)}(t)dt\right.\\
	&-\left. (n+\alpha+\beta-r+1)\int_0^1p_{n-r+\alpha+\beta,k+\alpha}(t)f^{(r)}(t)dt\right\}=\sum_{k=0}^{n-r}p_{n-r,k}(x)A_{n,k}^{(\alpha,\beta)}(f^{(r)}).
	\end{align*}
	Using Lemma \ref{l1} for $f\in C^{r+2}[0,1]$, the proof is completed.
	\end{proof}

\begin{theorem}
	Let $f\in C^{r+2}[0,1]$, $r=0,1,\dots,n$. The Durrmeyer operators with Jacobi weights verify
	\begin{align*}  \| (M_n^{(\alpha,\beta)}f)^{(r)}-M_{n-r}^{(\alpha,\beta)}(f^{(r)}) \| &\leq \displaystyle\frac{r(\alpha+\beta+r+1)}{n+\alpha+\beta+2}\| f^{(r)}\|+
	\displaystyle\frac{1}{4}\|f^{(r+2)}\|\frac{n+\alpha+\beta+3}{(n+\alpha+\beta+3)^2-r^2}\\
	&+\omega\left(f^{(r)},\frac{r(n-r+|\beta-\alpha|)}{(n+2+\alpha+\beta)^2-r^2}\right). \end{align*}
\end{theorem}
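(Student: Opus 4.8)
The plan is to reduce this estimate to Theorem~\ref{t2A} by peeling off the scalar factor that separates the normalized derivative appearing there from the plain derivative appearing here (the case $r=0$ is trivial, since then the left-hand side vanishes). Set
$$\lambda:=\frac{\Gamma(n+\alpha+\beta+r+2)\Gamma(n-r+1)}{\Gamma(n+\alpha+\beta+2)\Gamma(n+1)},$$
so that Theorem~\ref{t2A} controls $\bigl\|\lambda\,(M_n^{(\alpha,\beta)}f)^{(r)}-M_{n-r}^{(\alpha,\beta)}(f^{(r)})\bigr\|$. First I would write
$$(M_n^{(\alpha,\beta)}f)^{(r)}-M_{n-r}^{(\alpha,\beta)}(f^{(r)})=(1-\lambda)(M_n^{(\alpha,\beta)}f)^{(r)}+\Bigl[\lambda\,(M_n^{(\alpha,\beta)}f)^{(r)}-M_{n-r}^{(\alpha,\beta)}(f^{(r)})\Bigr]$$
and apply the triangle inequality, so that the bracketed second term is bounded immediately by Theorem~\ref{t2A}, producing exactly the last two summands of the claimed estimate.

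It then remains to control the first term $(1-\lambda)(M_n^{(\alpha,\beta)}f)^{(r)}$. Here I would invoke the identity \eqref{Abel} of Abel et al., which reads $(M_n^{(\alpha,\beta)}f)^{(r)}=\mu\,M_{n-r}^{(\alpha+r,\beta+r)}f^{(r)}$ with
$$\mu:=\frac{n(n-1)\cdots(n-r+1)}{(n+\alpha+\beta+2)(n+\alpha+\beta+3)\cdots(n+\alpha+\beta+r+1)}=\frac{1}{\lambda}.$$
Consequently $(1-\lambda)\mu=\mu-1$, so the first term equals $(\mu-1)\,M_{n-r}^{(\alpha+r,\beta+r)}f^{(r)}$. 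Since $M_{n-r}^{(\alpha+r,\beta+r)}$ is a positive linear operator reproducing constants, we have $\bigl\|M_{n-r}^{(\alpha+r,\beta+r)}f^{(r)}\bigr\|\le\|f^{(r)}\|$, whence the first term is bounded in norm by $|1-\mu|\,\|f^{(r)}\|$.

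The only real computation is the estimate $|1-\mu|\le\frac{r(\alpha+\beta+r+1)}{n+\alpha+\beta+2}$, which I would obtain as follows. Writing $\mu=\prod_{j=0}^{r-1}a_j$ with $a_j=\frac{n-j}{n+\alpha+\beta+2+j}$, each factor satisfies $0<a_j\le1$ because $\alpha+\beta+2>0$ (as $\alpha,\beta>-1$); hence $0<\mu\le1$ and $|1-\mu|=1-\mu$. Using the elementary inequality $1-\prod_j a_j\le\sum_j(1-a_j)$ valid for factors in $(0,1]$, together with $1-a_j=\frac{\alpha+\beta+2+2j}{n+\alpha+\beta+2+j}\le\frac{\alpha+\beta+2+2j}{n+\alpha+\beta+2}$, I would sum over $j=0,\dots,r-1$; since $\sum_{j=0}^{r-1}(\alpha+\beta+2+2j)=r(\alpha+\beta+r+1)$, this delivers the stated bound on $1-\mu$, and thus the first term is at most $\frac{r(\alpha+\beta+r+1)}{n+\alpha+\beta+2}\|f^{(r)}\|$. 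Adding this to the bound supplied by Theorem~\ref{t2A} finishes the proof. I anticipate no genuine obstacle: the structure is a clean triangle-inequality split, and the only points requiring care are the identification $\mu=1/\lambda$ (so that the scalar discrepancy becomes $(\mu-1)M_{n-r}^{(\alpha+r,\beta+r)}f^{(r)}$) and the telescoping product-to-sum estimate, both of which are routine.
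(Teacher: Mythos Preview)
Your proof is correct and follows essentially the same strategy as the paper: the identical triangle-inequality split, the appeal to Theorem~\ref{t2A} for the bracketed term, and the same target bound $1-\mu\le\frac{r(\alpha+\beta+r+1)}{n+\alpha+\beta+2}$. The only cosmetic difference is that the paper bounds $\|(M_n^{(\alpha,\beta)}f)^{(r)}\|\le\mu\|f^{(r)}\|$ via a sign argument with $e_r$ before multiplying by $\lambda-1$, whereas you obtain the equivalent estimate more directly by inserting the Abel identity and using that $M_{n-r}^{(\alpha+r,\beta+r)}$ is a positive contraction; the final product-to-sum estimate for $1-\mu$ is likewise a harmless variant of the paper's $1-x^r\le r(1-x)$ step.
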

\begin{proof}
	Since $\left(\displaystyle\frac{\| f^{(r)}\|}{r!}e_r\pm f\right)^{(r)}\geq 0$, using the relation (\ref{Abel}), we get $$\left(M_n\left(\displaystyle \frac{\|f^{(r)}\|}{r!}e_r\pm f\right)\right)^{(r)}\geq 0.$$
	It is well-known, see	\cite[p.7]{15}, that:
	$$  M_n^{(\alpha,\beta)}(e_r;x)=\displaystyle\frac{1}{(n+\alpha+\beta+2)^{\overline{r}}}\sum_{k=0}^r {r\choose k}n^{\underline{j}}(\alpha+r)^{\overline{r-k}}x^k,$$
	where $x^{\overline{l}}$ and $x^{\underline{l}}$, $x\in\mathbb{R}$ are the rising and falling factorials respectively, given by $x^{\overline{l}}=\prod_{\nu=0}^{l-1}(x+\nu)$, $x^{\underline{l}}=\prod_{\nu=0}^{l-1}(x-\nu)$ for $l\in\mathbb{N}$, $x^{\overline{0}}=x^{\underline{0}}=1$.
	
	Thus, $\displaystyle\frac{\| f^{(r)}\|}{r!}\frac{\Gamma(n+1)\Gamma(n+\alpha+\beta+2)}{\Gamma(n-r+1)\Gamma(n+\alpha+\beta+r+2)}r!\pm (M_n^{(\alpha,\beta)}f)^{(r)}\geq 0,$ i.e.,
	$$ \| \left(M_n f\right)^{(r)}\|\leq\frac{\Gamma(n+1)\Gamma(n+\alpha+\beta+2)}{\Gamma(n-r+1)\Gamma(n+\alpha+\beta+r+2)} \|f^{(r)}\|.$$
	Denote
	$$ \theta(f;n,r)=\left\|\displaystyle\frac{\Gamma(n+\alpha+\beta+r+2)\Gamma(n-r+1)}{\Gamma(n+\alpha+\beta+2)\Gamma(n+1)}\left(M_n^{(\alpha,\beta)}f\right)^{(r)}-M_{n-r}^{(\alpha,\beta)}\left(f^{(r)}\right)\right\|. $$
	The differences of Durrmeyer operators with Jacobi weights  can be written as
	\begin{align*}
	&\|(M_n^{(\alpha,\beta)}f)^{(r)}-M_{n-r}^{(\alpha,\beta)}(f^{(r)})\|\\
	&\leq \left|\displaystyle\frac{\Gamma(n-r+1)\Gamma(n+\alpha+\beta+r+2)}{\Gamma(n+1)\Gamma(n+\alpha+\beta+2)}-1\right|\,\|(M_n^{(\alpha,\beta)}f)^{(r)}\|
	+\theta(f;n,r)\\
	&\leq\left(1-\frac{n(n-1)\dots(n-r+1)}{(n+\alpha+\beta+r+1)(n+\alpha+\beta+r)\dots(n+\alpha+\beta+2)}\right)\| f^{(r)}\|+\theta(f;n,r)\\
	&\leq \left(1-\left(\displaystyle\frac{n-r+1}{n+\alpha+\beta+2}\right)^r\right)\| f^{(r)}\|+\theta(f;n,r)
	\leq \left(1-\displaystyle\frac{n-r+1}{n+\alpha+\beta+2}\right)r\| f^{(r)}\|+\theta(f;n,r)\\
	&= \displaystyle\frac{r(r+\alpha+\beta+1)}{n+\alpha+\beta+2}\| f^{(r)}\|+\theta(f;n,r).
	\end{align*}
	Using Theorem \ref{t2A} the proof is completed.
	
\end{proof}

\begin{cor}\label{tt2}
	For Durrmeyer operators  the following property holds:
	$$\left\|\displaystyle\frac{(n\!+\!r\!+\!1)!(n-r)!}{(n\!+\!1)!n!}\left(M_nf\right)^{(r)}\!-\!M_{n-r}(f^{(r)})\right\|\!\leq\! \displaystyle\frac{1}{4}\|f^{(r+2)}\|\frac{n+3}{(n+3)^2\!-\!r^2}+\omega\left(f^{(r)},\frac{r(n-r)}{(n+2)^2-r^2}\right),  $$
	where $f\in C^{r+2}[0,1],\,\,r=1,\dots, n. $
\end{cor}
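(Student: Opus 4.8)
The plan is to obtain this corollary as a direct specialization of Theorem \ref{t2A}. The classical Durrmeyer operators $M_n$ coincide with the Jacobi-weighted Durrmeyer operators $M_n^{(\alpha,\beta)}$ in the case $\alpha=\beta=0$, as noted in the paragraph immediately following the definition of $M_n^{(\alpha,\beta)}$. Consequently, the entire apparatus built for the weighted case---the derivative identity \eqref{Abel}, the functional $A_{n,k}^{(\alpha,\beta)}$, and the estimate of Lemma \ref{l1}---applies verbatim once we substitute $\alpha=0$ and $\beta=0$ throughout.

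Concretely, I would simply set $\alpha=\beta=0$ in the conclusion of Theorem \ref{t2A}. On the right-hand side this is immediate: the curvature term becomes $\tfrac14\|f^{(r+2)}\|\,\tfrac{n+3}{(n+3)^2-r^2}$, and since $|\beta-\alpha|=0$ the modulus argument collapses to $\tfrac{r(n-r)}{(n+2)^2-r^2}$, which is exactly what appears in the statement.

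The only genuinely computational piece---and the place where a little care is needed---is the normalizing constant multiplying $(M_nf)^{(r)}$. With $\alpha=\beta=0$ the prefactor of Theorem \ref{t2A} reads $\tfrac{\Gamma(n+r+2)\,\Gamma(n-r+1)}{\Gamma(n+2)\,\Gamma(n+1)}$, and I would rewrite each Gamma value via $\Gamma(m+1)=m!$ for integer $m$, giving $\Gamma(n+r+2)=(n+r+1)!$, $\Gamma(n-r+1)=(n-r)!$, $\Gamma(n+2)=(n+1)!$, and $\Gamma(n+1)=n!$. This produces the factor $\tfrac{(n+r+1)!\,(n-r)!}{(n+1)!\,n!}$ appearing in the corollary.

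Thus there is no real obstacle here: the proof is a one-line invocation of Theorem \ref{t2A} at $\alpha=\beta=0$ followed by the routine translation of the Gamma-function prefactor into factorials. The closest thing to a pitfall is bookkeeping---making sure the shifts inside each Gamma are lined up correctly so that the factorial identities are applied to the right arguments---but no new estimate or idea is required.
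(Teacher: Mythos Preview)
Your proposal is correct and matches the paper's approach exactly: the corollary is stated without proof as the immediate specialization of Theorem~\ref{t2A} at $\alpha=\beta=0$, and your verification of the Gamma-to-factorial conversion and the simplification of the right-hand side is precisely what is intended.
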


\begin{cor}
	Let $f\in C^{r+2}[0,1]$, $r=0,1,\dots,n$. The Durrmeyer operators verify
	\begin{align*}  \| (M_nf)^{(r)}-M_{n-r}(f^{(r)}) \|&\leq \displaystyle\frac{r(r+1)}{n+2}\| f^{(r)}\|+\frac{n+3}{4[(n+3)^2-r^2]}\| f^{(r+2)}\|+\omega\left(f^{(r)},\displaystyle\frac{r(n-r)}{(n+2)^2-r^2}\right). \end{align*}
\end{cor}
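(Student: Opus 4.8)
The plan is to obtain this statement as the special case $\alpha=\beta=0$ of the preceding theorem on Durrmeyer operators with Jacobi weights. As recalled in Section 4, the classical Durrmeyer operators $M_n$ are precisely $M_n^{(\alpha,\beta)}$ with $\alpha=\beta=0$, so it suffices to substitute $\alpha=\beta=0$ into that theorem's inequality
\begin{align*}
\|(M_n^{(\alpha,\beta)}f)^{(r)}-M_{n-r}^{(\alpha,\beta)}(f^{(r)})\|
&\leq \frac{r(\alpha+\beta+r+1)}{n+\alpha+\beta+2}\|f^{(r)}\|
+\frac14\|f^{(r+2)}\|\frac{n+\alpha+\beta+3}{(n+\alpha+\beta+3)^2-r^2}\\
&\quad+\omega\left(f^{(r)},\frac{r(n-r+|\beta-\alpha|)}{(n+2+\alpha+\beta)^2-r^2}\right)
\end{align*}
and then simplify each of the three terms.

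First I would handle the coefficient of $\|f^{(r)}\|$: setting $\alpha=\beta=0$ turns $\frac{r(\alpha+\beta+r+1)}{n+\alpha+\beta+2}$ into $\frac{r(r+1)}{n+2}$, which is the first term on the right-hand side of the claim. Next, the second summand reduces to $\frac14\|f^{(r+2)}\|\frac{n+3}{(n+3)^2-r^2}$, which is exactly $\frac{n+3}{4[(n+3)^2-r^2]}\|f^{(r+2)}\|$. Finally, because $|\beta-\alpha|=0$ and $n+2+\alpha+\beta=n+2$, the argument of the modulus of continuity collapses to $\frac{r(n-r)}{(n+2)^2-r^2}$, matching the last term. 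Assembling the three pieces yields the asserted bound.

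I do not expect any genuine obstacle here: the entire content is a routine specialization, and all three reductions are elementary evaluations of the parameters. The only points worth a moment's care are confirming that the hypotheses $f\in C^{r+2}[0,1]$ and $r=0,1,\dots,n$ are inherited unchanged from the parent theorem (they are, since neither depends on $\alpha$ or $\beta$), and checking that the three simplifications above have been carried out without arithmetic slips, since the final displayed constants must agree verbatim with the stated corollary.
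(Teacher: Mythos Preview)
Your proposal is correct and is exactly the approach the paper intends: the corollary is stated immediately after the Jacobi-weighted theorem without a separate proof, so its content is precisely the specialization $\alpha=\beta=0$, and your three term-by-term reductions are accurate.
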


\section{The genuine Bernstein-Durrmeyer operators}
 The genuine Bernstein-Durrmeyer operators (see  \cite{dif_A2}, \cite{dif_GBD}) are defined as follows:
$$ U_{n}(f;x)\!=\!(1\!-\!x)^n f(0)\!+ x^n f(1)\!+\!(n\!-\!1)
\displaystyle\sum_{k=1}^{n-1}\left(\int_{0}^1\!\! f(t)p_{n-2,k-1}(t)dt\!\right)p_{n,k}(x),\quad  f\!\in\! C[0,1].  $$

These operators are limits of the Bernstein-Durrmeyer operators with Jacobi weights (see \cite{D11}, \cite{D12}, \cite{Paltanea}), namely
$$  U_nf=\displaystyle\lim_{\alpha\to-1, \beta\to -1} M_n^{(\alpha,\beta)}f. $$

\begin{theorem}\label{t2}
	For the genuine Bernstein-Durrmeyer operators  the following property holds:
	\begin{align*}\left\|\displaystyle\frac{(n+r-1)!(n-r)!}{(n-1)!n!}\left(U_{n}f\right)^{(r)}-U_{n-r}(f^{(r)})\right\|&\leq \displaystyle\frac{1}{4} \frac{n+1}{(n+1)^2\!-\!r^2}\|f^{(r+2)}\|+\frac{r}{n+r} \| f^{(r+1)}\|\\
	&+\omega\left(f^{(r)},\frac{r(n-2-r)}{n^2-r^2}\right),
\end{align*}
	where $f\in C^{r+2}[0,1],\,\,r=1,\dots, n-2. $
\end{theorem}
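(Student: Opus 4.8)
The plan is to reduce the statement to the Durrmeyer--Jacobi machinery already developed, exploiting the relation $U_m=\lim_{\alpha,\beta\to-1}M_m^{(\alpha,\beta)}$. First I would pass to the limit $\alpha,\beta\to-1$ in Abel's identity (\ref{Abel}). Its denominator $(n+\alpha+\beta+2)\cdots(n+\alpha+\beta+r+1)$ tends to $n(n+1)\cdots(n+r-1)$ and $M_{n-r}^{(\alpha+r,\beta+r)}\to M_{n-r}^{(r-1,r-1)}$, the limiting weights $r-1\ge 0$ being admissible precisely because $r\ge 1$. This yields the derivative identity
$$\left(U_nf\right)^{(r)}=\frac{n!\,(n-1)!}{(n-r)!\,(n+r-1)!}\,M_{n-r}^{(r-1,r-1)}\!\left(f^{(r)}\right).$$
Multiplying by the normalizing factor and using $U_{n-r}=M_{n-r}^{(-1,-1)}$, the quantity to be estimated becomes the difference of two Durrmeyer operators of the \emph{same} degree $n-r$ but different Jacobi weights,
$$\frac{(n+r-1)!(n-r)!}{(n-1)!\,n!}\left(U_nf\right)^{(r)}-U_{n-r}\!\left(f^{(r)}\right)=M_{n-r}^{(r-1,r-1)}\!\left(f^{(r)}\right)-U_{n-r}\!\left(f^{(r)}\right).$$

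Next I would expand both operators in the common basis $p_{n-r,k}(x)$ and write the difference as $\sum_{k=0}^{n-r}p_{n-r,k}(x)\,\Delta_k\!\left(f^{(r)}\right)$, where each $\Delta_k$ is a difference of positive linear functionals of total mass one. For the interior indices $1\le k\le n-r-1$ both functionals are non-degenerate Beta functionals, with parameters $(k+r,\,n-k)$ and $(k,\,n-r-k)$ respectively, so this $\Delta_k$ is exactly $A_{n,k}^{(\alpha,\beta)}\!\left(f^{(r)}\right)$ read off at $\alpha=\beta=-1$, and the argument of Lemma \ref{l1} applies verbatim (it requires $f^{(r)}\in C^2$, guaranteed by $f\in C^{r+2}$). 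The crucial point is that, once we restrict to interior indices, the quantity $|n-r-2k|$ governing the gap between the two means is bounded by $n-r-2$ rather than $n-r$; this sharpens the modulus argument to $\frac{r(n-2-r)}{n^2-r^2}$ and produces the term $\frac14\,\frac{n+1}{(n+1)^2-r^2}\|f^{(r+2)}\|$ as the $\alpha=\beta=-1$ specialization of the second-derivative bound of Lemma \ref{l1}.

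The main obstacle is the boundary, and this is where the extra term $\frac{r}{n+r}\|f^{(r+1)}\|$ originates. For $k=0$ and $k=n-r$ the degenerate weight $(-1,-1)$ collapses the corresponding functional of $U_{n-r}$ to the point evaluations $f^{(r)}(0)$ and $f^{(r)}(1)$, so Lemma \ref{l1} no longer applies and these two indices must be handled directly. Here I would use positivity together with $|f^{(r)}(t)-f^{(r)}(0)|\le t\,\|f^{(r+1)}\|$: the $k=0$ functional of $M_{n-r}^{(r-1,r-1)}$ is $\mathrm{Beta}(r,n)$ with first moment $\frac{r}{n+r}$, whence $|\Delta_0(f^{(r)})|\le\frac{r}{n+r}\|f^{(r+1)}\|$, and symmetrically for $k=n-r$ using $\mathrm{Beta}(n,r)$ and $\int(1-t)\,d\mathrm{Beta}=\frac{r}{n+r}$.

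Finally I would assemble the three contributions. Bounding each interior $|\Delta_k|$ by the common constant above and summing against $\sum_{k=1}^{n-r-1}p_{n-r,k}(x)\le 1$, while bounding the two boundary contributions by $\frac{r}{n+r}\|f^{(r+1)}\|\big[(1-x)^{n-r}+x^{n-r}\big]\le\frac{r}{n+r}\|f^{(r+1)}\|$, the three pieces add up to exactly the claimed right-hand side, and taking the supremum over $x\in[0,1]$ finishes the proof. The only delicate points are the justification of the limit in Abel's identity and the separate endpoint analysis, the latter being the genuine source of the $\|f^{(r+1)}\|$ term that is absent in the Jacobi case.
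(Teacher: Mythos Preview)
Your approach is correct and mirrors the paper's proof: both separate the boundary indices $k=0,\,n-r$ (handled via $|f^{(r)}(t)-f^{(r)}(0)|\le t\,\|f^{(r+1)}\|$ and the Beta first moment $r/(n+r)$) from the interior indices $1\le k\le n-r-1$ (handled by the $A$-functional and Lemma~\ref{l1}), and then assemble using $\sum_{k=1}^{n-r-1}p_{n-r,k}\le 1$ and $p_{n-r,0}+p_{n-r,n-r}\le 1$. The only cosmetic difference is that the paper labels the interior functional as $A_{n-2,k-1}$ (i.e.\ with $\alpha=\beta=0$ and $n\mapsto n-2$, $k\mapsto k-1$), so that Lemma~\ref{l1} applied verbatim already gives the modulus argument $r(n-2-r)/(n^2-r^2)$ without the separate sharpening step you describe for the $\alpha=\beta=-1$ parametrization.
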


\begin{proof} First,
\begin{align*}
&\displaystyle\frac{(n+r-1)!(n-r)!}{(n-1)!n!}\left(U_{n}(f;x)\right)^{(r)}-U_{n-r}\left(f^{(r)};x\right)
=\displaystyle\frac{(n+r-1)!(n-r)!}{(n-1)!n!} \frac{n(n-1)\dots(n-r+1)}{(n\!+\!r\!-\!2)(n+r-3)\dots n}\\ &\displaystyle\sum_{k=0}^{n-r}p_{n-r,k}(x)\int_0^1p_{n+r-2,k+r-1}(t)f^{(r)}(t)dt\!-\!U_{n-r}\left(f^{(r)};x\right)\\
&=(n+r-1)\displaystyle\sum_{k=0}^{n-r}p_{n-r,k}(x)\int_0^1p_{n+r-2,k+r-1}(t)f^{(r)}(t)dt-p_{n-r,0}(x)f^{(r)}(0)-p_{n-r,n-r}(x)f^{(r)}(1)\\
&-(n-r-1)\displaystyle\sum_{k=1}^{n-r-1}p_{n-r,k}(x)\int_0^1p_{n-r-2,k-1}(t)f^{(r)}(t)dt\\
&=(n\!+\!r\!-\!1)p_{n-r,0}(x)\int_0^1p_{n+r-2,r-1}(t)f^{(r)}(t)dt+(n+r-1)p_{n-r,n-r}(x)\int_0^1 p_{n+r-2,n-1}(t) f^{(r)}(t)dt\\
&-p_{n-r,0}(x)f^{(r)}(0)-p_{n-r,n-r}(x)f^{(r)}(1)+(n+r-1)\sum_{k=1}^{n-r-1}p_{n-r,k}(x)\int_0^1p_{n+r-2,k+r-1}(t)f^{(r)}(t)dt\\
&-(n-r-1)\sum_{k=1}^{n-r-1}p_{n-r,k}(x)\int_0^1p_{n-r-2,k-1}(t)f^{(r)}(t)dt\\
&=p_{n-r,0}(x)\int_0^1(n+r-1)p_{n+r-2,r-1}(t)\left(f^{(r)}(t)-f^{(r)}(0)\right)dt\\
&+p_{n-r,n-r}(x)\int_0^1(n+r-1)p_{n+r-2,n-1}(t)\left(f^{(r)}(t)-f^{(r)}(1)\right)dt\\
&+\displaystyle\sum_{k=1}^{n-r-1}p_{n-r,k}(x)\int_0^1\left[(n+r-1)p_{n+r-2,k+r-1}(t)-(n-r-1)p_{n-r-2,k-1}(t)\right]f^{(r)}(t)dt.
\end{align*}
It follows that
\begin{align*}
&\left|\displaystyle\int_0^1(n+r-1)p_{n+r-2,r-1}(t)\left(f^{(r)}(t)-f^{(r)}(0)\right)dt\right|\leq\int_0^1(n+r-1)p_{n+r-2,r-1}(t)\|f^{(r+1)}\|t dt\\
&=\| f^{(r+1)}\|\frac{r}{n+r},\\
&\left|\displaystyle\int_0^1(n+r-1)p_{n+r-2,n-1}(t)\left(f^{(r)}(t)-f^{(r)}(1)\right)dt\right|\leq \| f^{(r+1)}\|\frac{r}{n+r}.
\end{align*}
Using Lemma \ref{l1}, we get
\begin{align*}
&\int_0^1\left[(n+r-1)p_{n+r-2,k+r-1}(t)-(n-r-1)p_{n-r-2,k-1}(t)\right]f^{(r)}(t)dt=A_{n-2,k-1}(f^{(r)})\\
&\leq \displaystyle \frac{1}{4}\| f^{(r+2)}\|\frac{n+1}{(n+1)^2-r^2}+\omega\left(f^{(r)},\frac{r(n-2-r)}{n^2-r^2}\right).
\end{align*}
Since $\displaystyle\sum_{k=1}^{n-r-1}p_{n-r,k}(x)\leq 1$, $p_{n-r,0}(x)+p_{n-r,n-r}(x)\leq 1$, the proof is completed.	
\end{proof}

\section{Numerical Results}
In this section 
we will give some numerical examples in order to show the relevance of the theoretical results.

\begin{example} Let $f(x)=\displaystyle\frac{1}{32\pi}\left\{4\pi x\cos(2\pi x)-\pi\cos(2\pi x)-6\sin(2\pi x)\right\}$, $r=3$ and $E_{n,r}(f;x)=\left|\left(B_n(f;x)\right)^{(r)}-B_{n-r}(f^{(r)}(x))\right|$. 
	In Figure \ref{fig:1} are given the graphs of the functions $f^{(r)}$,  $B_{n-r}(f^{(r)})$ and  $(B_nf)^{(r)}$ for $n=50$ and $r=3$.
 Also,   for $n\in\{50,100,150\}$ the absolute value of the differences  are illustrated in Figure \ref{fig:2}.
	
	\begin{minipage}{\linewidth}
		\centering
		\begin{minipage}{0.4\linewidth}
			\begin{figure}[H]
				\includegraphics[width=\linewidth]{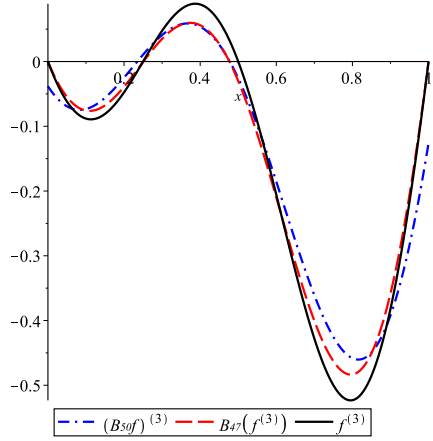}
				\caption{Approximation process by  $B_{n-r}(f^{(r)})$ and $(B_nf)^{(r)}$ }\label{fig:1}
			\end{figure}
		\end{minipage}
		\hspace{0.05\linewidth}
		\begin{minipage}{0.4\linewidth}
			\begin{figure}[H]
				\includegraphics[width=\linewidth]{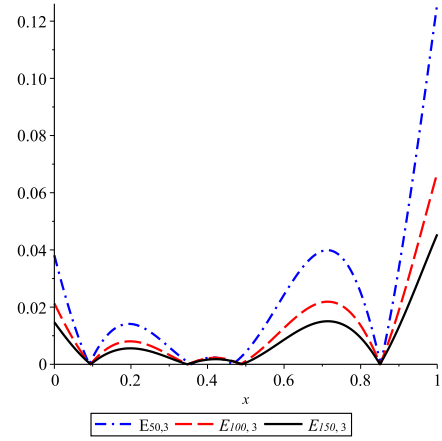}
				\caption{Error $E_{n,r}(f;x)$, for  $n\in\{50,100,150\}$}\label{fig:2}
			\end{figure}
		\end{minipage}
	\end{minipage}
\end{example}

\begin{example} Let $f(x)=\displaystyle -\frac{1}{4\pi^2}\sin(2\pi x)-\frac{32}{\pi^2}\sin\left(\frac{1}{4}\pi x\right)$, $r=2$ and $E_{n,r}(f;x)=$ \linebreak $\left|\left(K_n(f;x)\right)^{(r)}-K_{n-r}(f^{(r)}(x))\right|$. 
	In Figure \ref{fig:3} are given the graphs of the functions $f^{(r)}$,  $K_{n-r}(f^{(r)})$ and  $(K_nf)^{(r)}$ for $n=50$ and $r=2$.
	Also,   for $n\in\{50,100,150\}$ the absolute value of the differences  are illustrated in Figure \ref{fig:4}.
	
	\begin{minipage}{\linewidth}
		\centering
		\begin{minipage}{0.4\linewidth}
			\begin{figure}[H]
				\includegraphics[width=\linewidth]{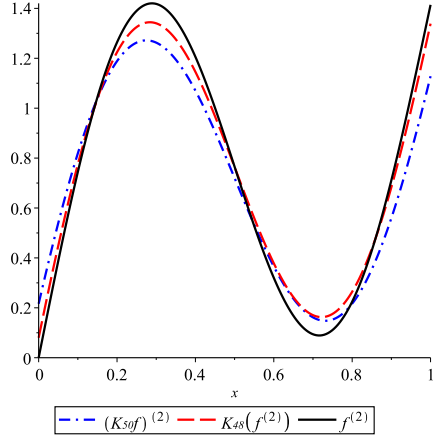}
				\caption{Approximation process by  $K_{n-r}(f^{(r)})$ and $(K_nf)^{(r)}$ }\label{fig:3}
			\end{figure}
		\end{minipage}
		\hspace{0.05\linewidth}
		\begin{minipage}{0.4\linewidth}
			\begin{figure}[H]
				\includegraphics[width=\linewidth]{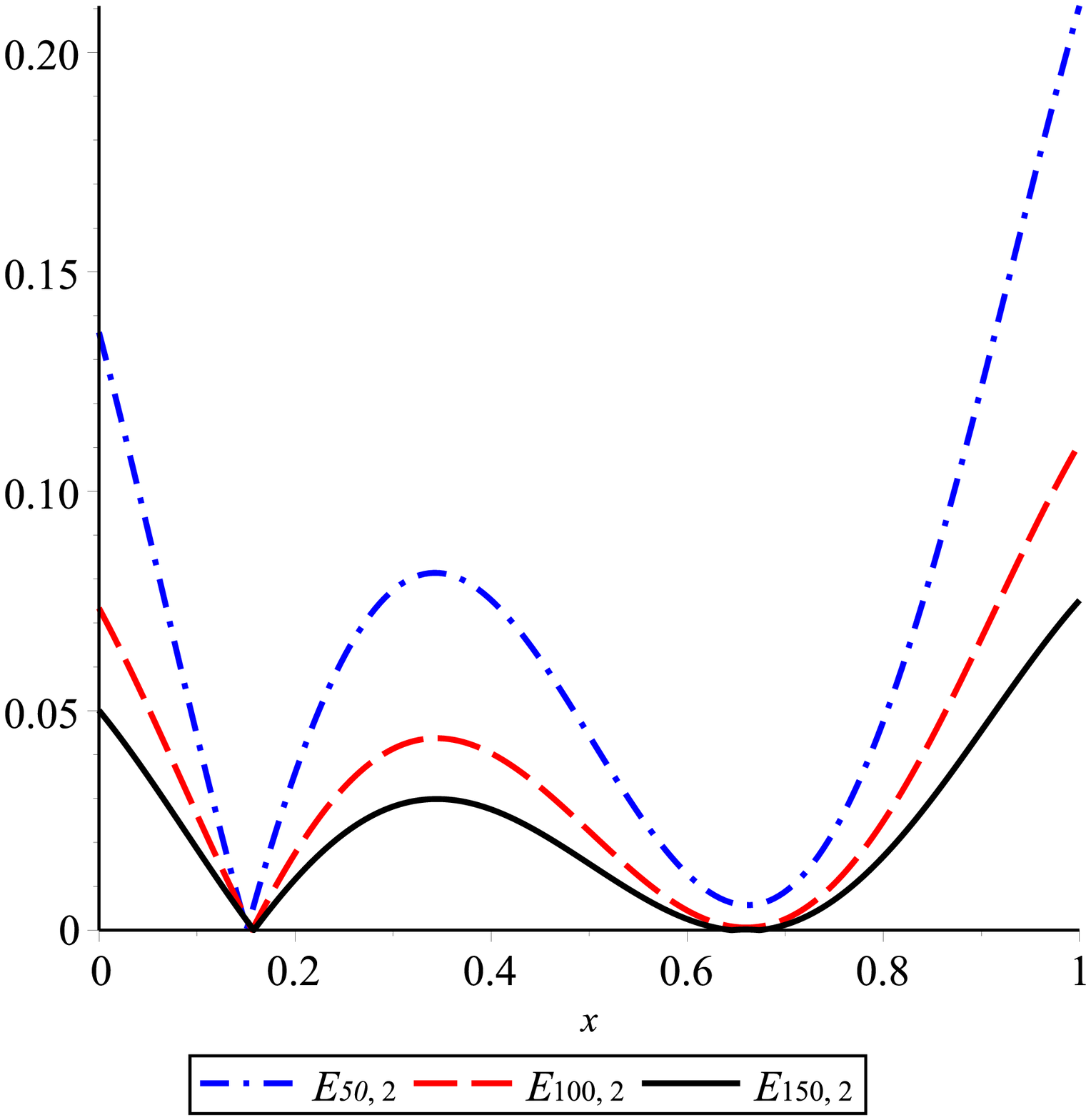}
				\caption{Error $E_{n,r}(f;x)$, for  $n\in\{50,100,150\}$}\label{fig:4}
			\end{figure}
		\end{minipage}
	\end{minipage}
\end{example}

\begin{example} Let $f(x)=\displaystyle \frac{1}{20}x^5-\frac{3}{32}x^4+\frac{13}{192}x^3-\frac{3}{128}x^2$, $r=2$ and $E_{n,r}(f;x)=$ \linebreak $\left|\left(M_n(f;x)\right)^{(r)}-M_{n-r}(f^{(r)}(x))\right|$. 
	In Figure \ref{fig:5} are given the graphs of the functions $f^{(r)}$,  $M_{n-r}(f^{(r)})$ and  $(M_nf)^{(r)}$ for $n=50$ and $r=2$.
	Also,   for $n\in\{50,100,150\}$ the absolute value of the differences are illustrated in Figure \ref{fig:6}.
	
	\begin{minipage}{\linewidth}
		\centering
		\begin{minipage}{0.4\linewidth}
			\begin{figure}[H]
				\includegraphics[width=\linewidth]{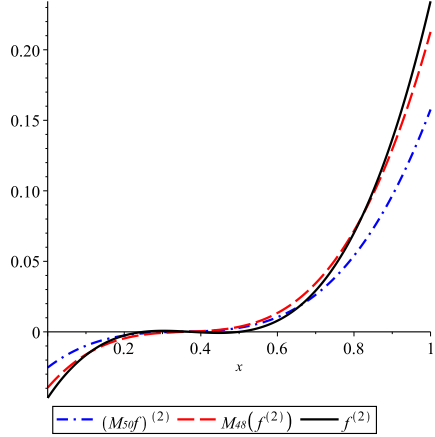}
				\caption{Approximation process by  $M_{n-r}(f^{(r)})$ and $(M_nf)^{(r)}$ }\label{fig:5}
			\end{figure}
		\end{minipage}
		\hspace{0.05\linewidth}
		\begin{minipage}{0.4\linewidth}
			\begin{figure}[H]
				\includegraphics[width=\linewidth]{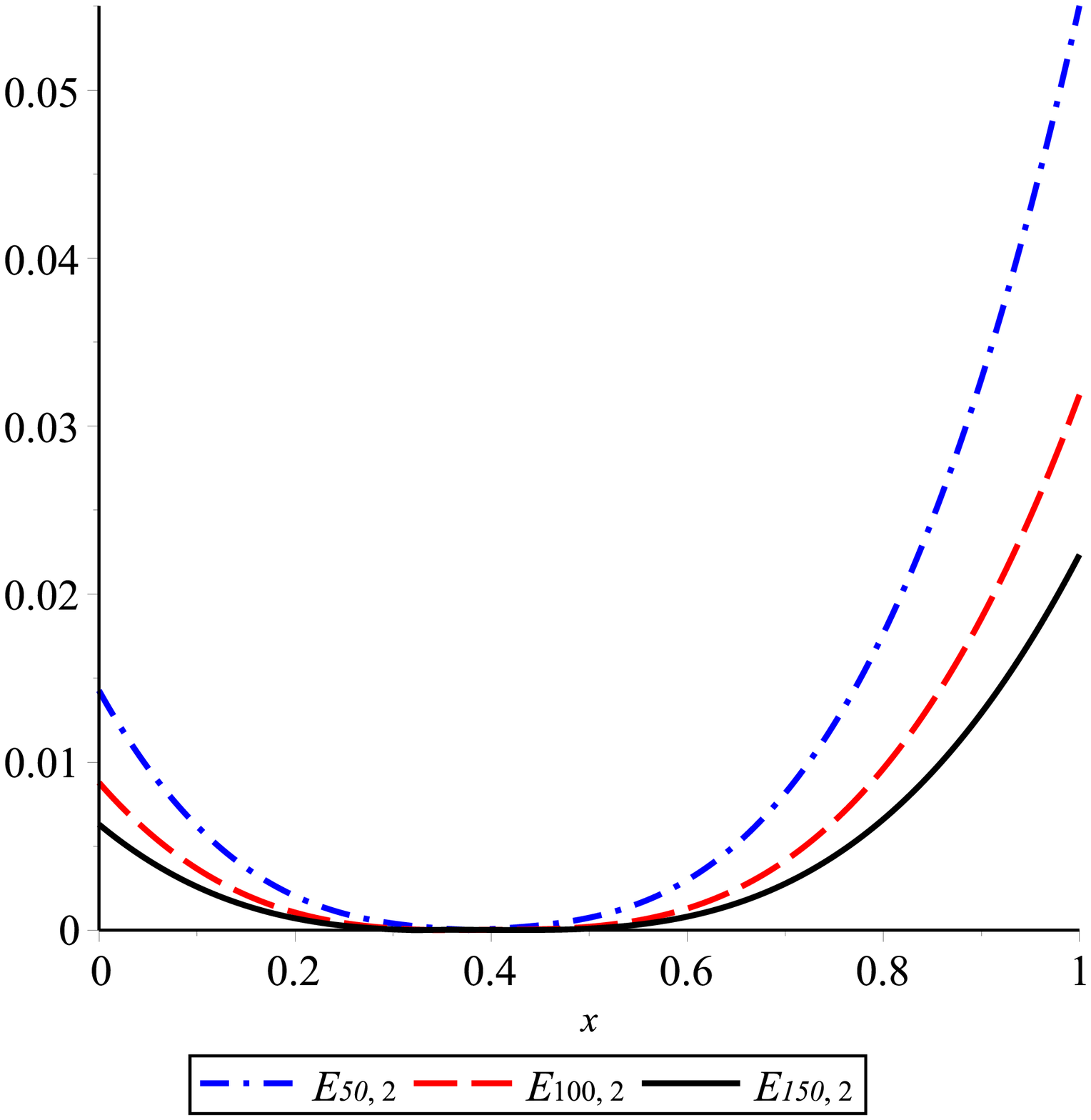}
				\caption{Error $E_{n,r}(f;x)$, for  $n\in\{50,100,150\}$}\label{fig:6}
			\end{figure}
		\end{minipage}
	\end{minipage}
\end{example}

\begin{example} Let $f(x)=\displaystyle \frac{1}{20}x^5-\frac{17}{144}x^4+\frac{7}{72}x^3-\frac{1}{32}x^2$, $r=2$ and $E_{n,r}(f;x)=$ \linebreak $\left|\left(U_n(f;x)\right)^{(r)}-U_{n-r}(f^{(r)}(x))\right|$. 
	In Figure \ref{fig:7} are given the graphs of the functions $f^{(r)}$,  $U_{n-r}(f^{(r)})$ and  $(U_nf)^{(r)}$ for $n=50$ and $r=2$.
	Also,   for $n\in\{30,40,50\}$ the absolute value of the differences  are illustrated in Figure \ref{fig:8}.
	
	\begin{minipage}{\linewidth}
		\centering
		\begin{minipage}{0.4\linewidth}
			\begin{figure}[H]
				\includegraphics[width=\linewidth]{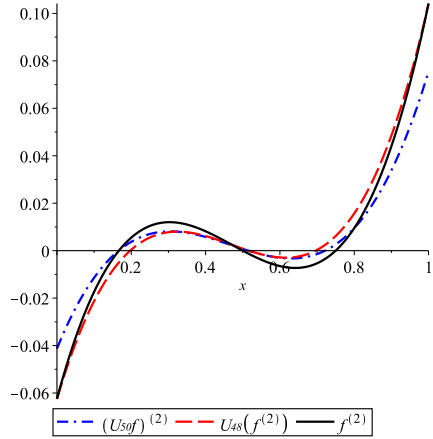}
				\caption{Approximation process by  $U_{n-r}(f^{(r)})$ and $(U_nf)^{(r)}$ }\label{fig:7}
			\end{figure}
		\end{minipage}
		\hspace{0.05\linewidth}
		\begin{minipage}{0.4\linewidth}
			\begin{figure}[H]
				\includegraphics[width=\linewidth]{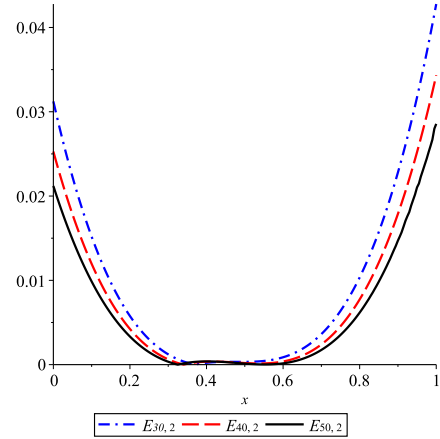}
				\caption{Error $E_{n,r}(f;x)$, for  $n\in\{30,40,50\}$}\label{fig:8}
			\end{figure}
		\end{minipage}
	\end{minipage}
\end{example}

$$    $$

\noindent{\bf Acknowledgement.} The work of the first author was financed from Lucian Blaga University of Sibiu research grants LBUS-IRG-2018-04.

$   $

\noindent{\bf References}


\begin{thebibliography}{99}
	
	\bibitem{Abel} U. Abel, M. Heilmann, {\it The Complete Asymptotic Expansion
		for Bernstein-Durrmeyer Operators
		with Jacobi Weights}, Mediterr. J. Math. 1 (2004), 487-499.
	\bibitem{ana} 	A.M. Acu, I. Rasa, {\it New estimates for the differences of positive linear operators}, Numerical Algorithms, 73(3),  775-789, 2016.
	
	\bibitem{AralRasa} A. Aral, D. Inoan, I. Ra\c sa, {\it On differences of linear positive operators},  Anal.Math.Phys., 2018, https://doi.org/10.1007/s13324-018-0227-7
	

	 
	 	\bibitem{D11}	H. Berens, Y. Xu, {\it On Bernstein-Durrmeyer polynomials with Jacobi
	 	weights}, In: Approximation Theory and Functional Analysis (ed. by C.K.
	 Chui),  Boston: Acad. Press 1991, 25-46.
	 
	 \bibitem{D12} H. Berens, Y. Xu, {\it  On Bernstein-Durrmeyer polynomials with Jacobi
	 	weights: The cases p = 1 and p = 1}, In: Approximation, Interpolation
	 and Summation (Israel Math. Conf. Proc., 4, ed. by S. Baron and D.
	 Leviatan), Ramat Gan: Bar–Ilan Univ. 1991,  51-62.
	 
\bibitem{Bernstein}  S.N. Bernstein, {\it D\'{e}monstration du th\'{e}or\`{e}me de Weierstrass fond\'{e}e
	sur le calcul des probabilit\'{e}s}, Communications de la
Soci\'{e}t\'{e} Mathematique de Kharkov, 13, 1913, 1--2.



	\bibitem{dif_A2} L. Beutel, H. Gonska, D. Kacs\'{o}, G. Tachev, {\it Variation-diminishing splines revised}, in Proc.
	Int. Sympos. on Numerical Analysis and Approximation Theory (Radu Tr\^{a}mbi\c ta\c s, ed.),  Presa Universitar\u a Clujean\u a, Cluj-Napoca, 2002, 54-75.
	




\bibitem{1a}  J.L. Durrmeyer, {\it Une formule d'inversion de la transforme de Laplace: Applications a la theorie des moments},
These de 3e cycle, Paris, (1967).



 \bibitem{Adif_GR1} H. Gonska, P. Pi\c tul, I. Ra\c sa, {\it On differences of positive linear operators}, Carpathian J. Math. 22(1-2) (2006), 65-78.

\bibitem{Adif_NAAT} H. Gonska, P. Pi\c tul, I. Ra\c sa, {\it On Peano's form of the Taylor remainder,
	Voronovskaja's theorem and the commutator of positive linear operators},
in Numerical Analysis and Approximation Theory (Proc. Int. Conf. Cluj-Napoca 2006; ed. by
O. Agratini and P. Blaga), Cluj-Napoca, Casa C\u ar\c tii de \c Stiin\c t\u a, 2006, 55-80.





\bibitem{Adif_GR2} H. Gonska, I. Ra\c sa, {\it Differences of positive linear operators and the second order modulus}, Carpathian J. Math. 24(3), 2008, 332-340.



\bibitem{dif_GBD} T.N.T. Goodman, A. Sharma, {\it A modified Bernstein-Schoenberg
	operator},  Proc. of the Conference on Constructive Theory
of Functions, Varna 1987 (ed. by Bl. Sendov et al.). Sofia:
Publ. House Bulg. Acad. of Sci., 1988, 166-173.



\bibitem{K} L.V. Kantorovich, {\it Sur certains developpements suivant les polyn\^{o}mes de la
	forme de S. Bernstein I, II}, Dokl. Akad. Nauk. SSSR (1930), 563-568, 595-600.



\bibitem{dif_A19} A. Lupa\c s, {\it Die Folge der Betaoperatoren}, Dissertation, Universit\"{a}t
Stuttgart, 1972.

\bibitem{Adif_A20} A. Lupa\c s, {\it The approximation by means of some linear positive operators}, in Approximation Theory (M.W. M\"{u}ller et al., eds), Akademie-Verlag, Berlin, 1995, 201-227.


\bibitem{Paltanea} R. P\u{a}lt\u{a}nea, {\it Sur un op\'erateur polynomial defini sur l'ensemble des fonctions
int\'egrables}, Babes-Bolyai Univ. Fac. Math. Comput. Sci. Res. Semin. 2 (1983), 101-
106.

\bibitem{ARasa} I. Ra\c sa, {\it Discrete operators associated with certain integral operators}, Stud. Univ. Babe\c s-
Bolyai Math., 56(2) (2011),  537-544.

\bibitem{Adif_a} I. Ra\c sa, E. St\u anil\u a, {\it On some operators linking the Bernstein
	and the genuine Bernstein-Durrmeyer operators}, J. Appl. Funct.
Anal. 9 (2014), 369-378.

\bibitem{M113} O. Sz\'{a}sz, {\it Generalization of S. Bernstein's polynomials to the infinite interval}, J. Res.
Nat. Bur. Standards 45, 1950, 239-245.

\bibitem{15} R. Tenberg, {\it Linearkombinationen von Bernstein-Durrmeyer-Polynomen bzgl. Jacobi-Gewichtsfunktionen}, Diplomarbeit, Univ. Dortmund, Fachbereich Mathematik, 1994.
	
\end{thebibliography}
\end{document}